\begin{document}

\newtheorem{theorem}{Theorem}
\newtheorem{lemma}[theorem]{Lemma}
\newtheorem{claim}[theorem]{Claim}
\newtheorem{cor}[theorem]{Corollary}
\newtheorem{prop}[theorem]{Proposition}
\newtheorem{definition}{Definition}
\newtheorem{question}[theorem]{Question}
\newtheorem{remark}[theorem]{Remark}
\newcommand{\hh}{{{\mathrm h}}}

\numberwithin{equation}{section}
\numberwithin{theorem}{section}
\numberwithin{table}{section}

\def\sssum{\mathop{\sum\!\sum\!\sum}}
\def\ssum{\mathop{\sum\ldots \sum}}
\def\iint{\mathop{\int\ldots \int}}

\def\squareforqed{\hbox{\rlap{$\sqcap$}$\sqcup$}}
\def\qed{\ifmmode\squareforqed\else{\unskip\nobreak\hfil
\penalty50\hskip1em\null\nobreak\hfil\squareforqed
\parfillskip=0pt\finalhyphendemerits=0\endgraf}\fi}

\newfont{\teneufm}{eufm10}
\newfont{\seveneufm}{eufm7}
\newfont{\fiveeufm}{eufm5}
%
%
\newfam\eufmfam
     \textfont\eufmfam=\teneufm
\scriptfont\eufmfam=\seveneufm
     \scriptscriptfont\eufmfam=\fiveeufm
%
%
\def\frak#1{{\fam\eufmfam\relax#1}}

\newcommand{\bflambda}{{\boldsymbol{\lambda}}}
\newcommand{\bfmu}{{\boldsymbol{\mu}}}
\newcommand{\bfxi}{{\boldsymbol{\xi}}}
\newcommand{\bfrho}{{\boldsymbol{\rho}}}

\def\fK{Frak K}
\def\fT{Frak{T}}

\def\fA{{Frak A}}
\def\fB{{Frak B}}
\def\fC{{Frak C}}

\def \balpha{\bm{\alpha}}
\def \bbeta{\bm{\beta}}
\def \bgamma{\bm{\gamma}}
\def \blambda{\bm{\lambda}}
\def \bchi{\bm{\chi}}
\def \bphi{\bm{\varphi}}
\def \bpsi{\bm{\psi}}

\def\eqref#1{(\ref{#1})}

\def\vec#1{\mathbf{#1}}


\def\cA{{\mathcal A}}
\def\cB{{\mathcal B}}
\def\cC{{\mathcal C}}
\def\cD{{\mathcal D}}
\def\cE{{\mathcal E}}
\def\cF{{\mathcal F}}
\def\cG{{\mathcal G}}
\def\cH{{\mathcal H}}
\def\cI{{\mathcal I}}
\def\cJ{{\mathcal J}}
\def\cK{{\mathcal K}}
\def\cL{{\mathcal L}}
\def\cM{{\mathcal M}}
\def\cN{{\mathcal N}}
\def\cO{{\mathcal O}}
\def\cP{{\mathcal P}}
\def\cQ{{\mathcal Q}}
\def\cR{{\mathcal R}}
\def\cS{{\mathcal S}}
\def\cT{{\mathcal T}}
\def\cU{{\mathcal U}}
\def\cV{{\mathcal V}}
\def\cW{{\mathcal W}}
\def\cX{{\mathcal X}}
\def\cY{{\mathcal Y}}
\def\cZ{{\mathcal Z}}
\newcommand{\rmod}[1]{\: \mbox{mod} \: #1}

\def\cg{{\mathcal g}}

\def\vr{\mathbf r}

\def\e{{\mathbf{\,e}}}
\def\ep{{\mathbf{\,e}}_p}
\def\em{{\mathbf{\,e}}_m}

\def\Tr{{\mathrm{Tr}}}
\def\Nm{{\mathrm{Nm}}}

 \def\SS{{\mathbf{S}}}

\def\lcm{{\mathrm{lcm}}}
\def\ord{{\mathrm{ord}}}

\def\({\left(}
\def\){\right)}
\def\fl#1{\left\lfloor#1\right\rfloor}
\def\rf#1{\left\lceil#1\right\rceil}

\def\mand{\qquad \mbox{and} \qquad}

\newcommand{\commK}[1]{\marginpar{%
\begin{color}{red}
\vskip-\baselineskip 
\raggedright\footnotesize
\itshape\hrule \smallskip K: #1\par\smallskip\hrule\end{color}}}

\newcommand{\commI}[1]{\marginpar{%
\begin{color}{magenta}
\vskip-\baselineskip 
\raggedright\footnotesize
\itshape\hrule \smallskip I: #1\par\smallskip\hrule\end{color}}}

\newcommand{\commT}[1]{\marginpar{%
\begin{color}{blue}
\vskip-\baselineskip 
\raggedright\footnotesize
\itshape\hrule \smallskip T: #1\par\smallskip\hrule\end{color}}}




\hyphenation{re-pub-lished}

\mathsurround=1pt

\def\bfdefault{b}
\overfullrule=5pt

\def \F{{\mathbb F}}
\def \K{{\mathbb K}}
\def \N{{\mathbb N}}
\def \Z{{\mathbb Z}}
\def \Q{{\mathbb Q}}
\def \R{{\mathbb R}}
\def \C{{\mathbb C}}
\def\Fp{\F_p}
\def \fp{\Fp^*}

\def\Kmn{\cK_p(m,n)}
\def\psmn{\psi_p(m,n)}
\def\SI{\cS_p(\cI)}
\def\SIJ{\cS_p(\cI,\cJ)}
\def\SAIJ{\cS_p(\cA;\cI,\cJ)}
\def\SABIJ{\cS_p(\cA,\cB;\cI,\cJ)}
\def \xbar{\overline x_p}

\title[Divisor problem in arithmetic progressions]{Divisor problem in arithmetic progressions modulo a prime power}

 \author[K. Liu] {Kui Liu}
\address{School of Mathematics and  Statistics, Qingdao University, No.308, Ningxia Road, Shinan, Qingdao, Shandong, 266071, P. R. China}
\email{liukui@qdu.edu.cn}

 \author[I. E. Shparlinski] {Igor E. Shparlinski}

\address{Department of Pure Mathematics, University of New South Wales,
Sydney, NSW 2052, Australia}
\email{igor.shparlinski@unsw.edu.au}

 \author[T. P. Zhang] {Tianping Zhang}

 \thanks{T. P. Zhang is the corresponding author (tpzhang@snnu.edu.cn).}

\address{School of Mathematics and Information Science, Shaanxi Normal University, Xi'an 710062 Shaanxi, P. R. China}
\email{tpzhang@snnu.edu.cn}

\begin{abstract} We obtain an asymptotic formula for the average value of the divisor
function over the integers $n \le x$ in an arithmetic progression $n \equiv a \pmod q$,
where $q=p^k$ for a prime $p\ge 3$ and a sufficiently large integer $k$. In particular, we break
the classical barrier $q \le x^{2/3}$ for such formulas, and generalise a recent result
of R.~Khan (2015), making it uniform in $k$.
 \end{abstract}

\keywords{Divisor problem, Arithmetic progressions, Kloosterman sums, Prime powers}
\subjclass[2010]{11L05, 11N25, 11N37, 11T23}

\maketitle

\section{Introduction}

\subsection{Background}

For a positive integer $n$, let $d(n)$ be the classical divisor function, which is the number of divisors of $n$. Let $a$ and $q$ be integers with $q\ge 1$ and $\gcd(a,q)=1$. For $X\geq 2$, define
$$
D(X;q,a):=\sum\limits_{\substack{n\leq X\\n\equiv a\bmod q}}d(n).
$$
and also
$$
E(X;q,a):=D(X;q,a)-\frac{1}{\varphi(q)}\sum\limits_{\substack{n\leq X\\\gcd(n,q)=1}}d(n).
$$
In unpublished works, it has been discovered independently by Selberg and Hooley that for any $\varepsilon>0$
there exists some $\delta >0$ such that for a sufficiently large $X$
\begin{equation}
\label{Asymptotic formula}
|E(X;q,a)| \le X^{1-\delta}/q
\end{equation}
holds uniformly for $q\leq X^{2/3-\varepsilon}$. This follows from Weil bound for Klooterman sums,
see~\cite{PoVau}.

When $q$ is large, there are various results on the average bound of $E(X;q,a)$. Fouvry~\cite[Corollary~5]{Fouvry} has studied   the average over $q$ and shown that for any $\varepsilon>0$ there exist some constant $c>0$ such that for a sufficiently large $X$ for any  $a\in\Z$ with $|a| \le \exp(c \sqrt{\log X})$
we have
$$
\sum\limits_{\substack{X^{2/3+\varepsilon}\leq q\leq X^{1-\varepsilon}\\\gcd (q,a)=1}}\left|E(X;q,a)\right|\le X \exp(-c \sqrt{\log X})
$$
Banks, Heath-Brown and Shparlinski~\cite{Banks} have considered the average over $a$ and proved that for any $\varepsilon>0$
there exists some $\delta >0$ such that for a sufficiently large $X$
$$
\sum\limits_{\substack{1\leq a\leq q\\\gcd(a,q)=1}}\left|E(X;q,a)\right|\le X^{1-\delta}
$$
holds uniformly for $q<X^{1-\varepsilon}$. For other examples, see~\cite{Blomer,FouIw,FrIw1,FrIw2,LauZhao}.

Irving~\cite{Irving} first  has broken through the range given by Weil bound
(see~\cite[Corollary~11.12]{IwKow}) for some special individual modulus $q$ and proved that,
for any $\varpi,\varrho>0$ satisfying $246\varpi+18\varrho<1$, there exists some $\delta>0$, depending only on $\varpi$ and $\varrho$ such that~\eqref{Asymptotic formula} holds uniformly for any $x^{\varrho}$-smooth, squarefree moduli $q\leq X^{2/3+\varpi}$.
Khan~\cite{Khan} has considered another important case: the prime power moduli and proved that for a fixed integer $k\geq 7$, there exists some constant $\rho>0$, depending only on $k$, such that~\eqref{Asymptotic formula} holds uniformly for $X^{2/3-\rho}<q<X^{2/3+\rho}$ with $q=p^k$, where $p$ is a sufficiently large prime number.

\subsection{Our results}
In this paper, we focus on the prime power moduli case.

Before we formulate our result we need to recall that  the notations  $U \ll V$ and $U = O(V)$, are equivalent
to $|U|  \le c V)$ for some constant $c>0$. We write $\ll_\rho$ and $O_\rho$ to indicate that
this constant  may depend on the parameter $\rho$.

\begin{theorem}\label{Theorem on divisor problem}
There exist absolute  constants
 $k_0 \ge 1$ and $\sigma>0$ such that
$$
E(X;q,a)\ll \frac{X}{q^{1+\sigma}}
$$
holds uniformly for $q\leq X^{2/3+\sigma}$ with $q=p^k$
for an odd prime $p$  and integer $k\geq k_0$.
\end{theorem}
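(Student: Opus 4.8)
The plan is to follow the standard route through the hyperbola method and exponential sums, but to exploit the special structure of prime‑power moduli $q=p^k$ via $p$‑adic considerations and the theory of van der Corput / $\ell$‑adic stationary phase for Kloosterman‑type sums. First I would detect the congruence $n\equiv a\pmod q$ for $n=m\ell$ by additive characters modulo $q$, writing
\[
E(X;q,a)=\frac{1}{q}\sum_{h\neq 0}\ \sum_{\substack{m\ell\le X\\ \gcd(m\ell,q)=1}}\e_q\!\bigl(h m\ell \overline{a}\bigr)+(\text{error from }\gcd\text{ conditions}),
\]
and split the $m\ell$ sum by the hyperbola method at a parameter to be optimised near $\sqrt X$, so that in the main regime $m\le M$ with $M$ a bit larger than $\sqrt X$. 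Summing over $\ell$ in an interval first produces incomplete Kloosterman sums
\[
\sum_{\ell\in\cI}\e_q\!\bigl(h\overline{a}\,m\ell+ (\text{linear term from completion})\bigr),
\]
whose complete versions are classical Kloosterman sums $\cK_q(u,v)$. The point is that for $q=p^k$ with $k$ large these sums are \emph{not} merely square‑root cancelling: by the Cochrane–Zheng / stationary‑phase evaluation, $\cK_{p^k}(u,v)$ is either $O(p^{k/2})$ with an \emph{explicit} main term $p^{k/2}\e_{p^k}(\text{quadratic phase in the }p\text{-adic square root of }u/v)$, or vanishes unless a solubility condition holds. I would feed this explicit shape back in, so that the $h$‑ and $m$‑sums acquire a smooth, $p$‑adically controlled phase rather than an opaque Kloosterman sum.

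Next I would carry out a second application of completion/Poisson summation (or a $B$‑process in the van der Corput sense) in the variable $m$, now against this explicit quadratic‑in‑$\sqrt{\,\cdot\,}$ phase. Because the phase after the first step is given by an algebraic function of $m$ mod $p^k$ with controlled $p$‑adic valuation of its derivatives, the second completion converts the $m$‑sum of length $M$ into a dual sum of length roughly $q/M$ against another Kloosterman‑type sum to which the Cochrane–Zheng evaluation applies again. Iterating this "reflection" once more (a $BAB$ or $ABAB$‑type sequence of transformations) gains a saving precisely because each $B$‑process trades a sum of length $L$ for one of length $q/L$ with an $L^{1/2}$ type loss, and the $p$‑adic structure lets us track the lengths and phases exactly; choosing the hyperbola cutoff and the number of iterations optimally yields a power saving $X/q^{1+\sigma}$ for an absolute $\sigma>0$, uniformly once $k\ge k_0$ so that there is enough "room" in the $p$‑adic expansion for the stationary‑phase formulas (which need the valuation of the modulus to exceed a fixed bound) to be valid and for the geometric series of losses to close.

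Throughout I would need to handle several routine but necessary side issues: the contribution of the diagonal term $h=0$, which recovers the main term $\frac{1}{\varphi(q)}\sum_{n\le X,\ \gcd(n,q)=1}d(n)$ and must be cancelled against it; the terms with $p\mid n$, treated separately by pulling out powers of $p$ and inducting on a shorter divisor sum; the tails and smoothing errors from Poisson summation, controlled by partial summation since the cutoff functions are monotone; and the genuinely "unbalanced" part of the hyperbola where one variable is very small, handled trivially or by the Selberg–Hooley bound \eqref{Asymptotic formula} which is already available in the range $q\le X^{2/3-\varepsilon}$ — so we only need the new input to extend slightly past $X^{2/3}$, which is exactly what the extra $p$‑adic iteration buys.

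The main obstacle, and the step I expect to be most delicate, is making the iterated stationary‑phase analysis \emph{uniform in} $k$: each $B$‑process produces a new Kloosterman sum whose modulus is still $p^k$ but whose argument depends on $h,m$ in an increasingly complicated algebraic way, and one must verify that the $p$‑adic valuations of the relevant "Hessians" stay bounded (so that the Cochrane–Zheng formula keeps applying with the same shape) and that the accumulated error terms form a convergent geometric series with ratio bounded away from $1$ independently of $k$ and $p$. Controlling the solubility/vanishing conditions at each stage — i.e. ensuring that the sums do not degenerate into higher‑order stationary points where the main term is larger than $p^{k/2}$ — is where the hypotheses "$p$ odd" and "$k\ge k_0$" really get used, and is the heart of the argument.
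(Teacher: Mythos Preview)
Your opening moves match the paper's: detect the congruence with additive characters, cut with the hyperbola at $\sqrt X$, smooth, apply Poisson summation, and arrive at a bilinear sum over dual variables weighted by Kloosterman sums $S(a,mn;p^{k-r})$. Where you diverge is at the decisive step of beating the Weil bound. The paper does \emph{not} iterate a $B$-process or run a $BAB$-type reflection in $m$. Instead it isolates the sum over $n$ (with $m$ fixed) and proves a separate average estimate
\[
\sum_{1\le n\le N} S(mn,a;q)\ll_\lambda N q^{1/2-\tau}\qquad (q^\lambda\le N\le q),
\]
which is Theorem~1.2 / Corollary~1.3. This is the engine of the whole proof, and its mechanism is specific: use the explicit evaluation $S(n,a;p^k)=2(\tfrac{l}{p})^k p^{k/2}\,\Re\,\vartheta_q\,e(2l/q)$ with $l^2\equiv na\pmod q$, then expand the $p$-adic square root $\sqrt{1+\kappa p^s t}$ as a \emph{polynomial} in $t$ of bounded degree $d\asymp_\lambda 1$, and finally apply Korobov's bound for rational exponential sums $\sum_{t\le T}e(f(t)/p^\mu)$ with polynomial $f$, controlling the number of roots of $f^{(u)}$ via Konyagin's theorem. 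The uniformity in $k$ that you correctly flag as the crux is obtained precisely because the degree $d=\lfloor k/s\rfloor$ stays bounded in terms of $\lambda$ once $s$ is chosen proportionally to $\log N/\log p$, and Korobov's theorem then gives a saving $T^{-c/r^2}$ with $r$ and $d$ both $O_\lambda(1)$.

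Your proposed iterated stationary-phase scheme is not absurd --- $p$-adic van der Corput methods in this spirit exist --- but as written it has a genuine gap: you never identify a concrete object to which a known bound applies. After one Poisson step and one use of the Cochrane--Zheng evaluation you have a phase of the shape $e_q(2\sqrt{amn})$; your plan is to Poisson again in $m$, but you do not say what the dual sum looks like, why its ``Hessian'' valuation is controlled, or why the ratio of losses to gains is bounded below $1$ uniformly in $k$ and $p$. In the paper's framework the analogous difficulty is absorbed into two clean black boxes (Korobov + Konyagin) after the square-root phase has been truncated to a polynomial; your outline lacks any analogue of this reduction, and the heuristic ``$L\mapsto q/L$ with $L^{1/2}$ loss'' does not by itself close the estimate for an individual modulus. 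If you want to repair the argument, the missing ingredient is exactly a bound on $\sum_{n\le N}S(n,a;p^k)$ that saves a power of $q$ over Weil; once you have that, a single Poisson step suffices and no further iteration is needed.
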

The key in our proof of Theorem~\ref{Theorem on divisor problem} is the following average estimate for Kloosterman sums
$$
S(n,a;q):=\sideset{}{^*}\sum\limits_{b\bmod q}e\left(\frac{nb+a\bar{b}}{q}\right),
$$
with prime power moduli, where $\gcd(a,q)=1$ and $\sum^*$ means summing over reduced
residue classes. 
The proof borrows from some ideas from~\cite{SteShp1,SteShp2} reworked and
adjusted to the case which is relevant to Kloosterman sums.

\begin{theorem}\label{Theorem on Kloosterman sums}
For any $q^{\lambda}\leq N\le q$ with $\lambda>0$, there exist constants $k_0$ and $\tau>0$, depending only on $\lambda$ such that
$$
\sum\limits_{1\leq n\leq N}S(n,a;q)\ll_{\lambda}Nq^{1/2-\tau}
$$
holds uniformly for any integers $a$ satisfying $\gcd(a,p)=1$ and any $q=p^k$
with $p$ an odd prime, $k\geq k_0$.
\end{theorem}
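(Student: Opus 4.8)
The plan is to exploit the explicit evaluation of Kloosterman sums to prime power moduli, which — unlike the case of prime moduli — are essentially complete exponential sums after a change of variables and admit a near-closed-form expression. Write $q=p^k$ and split according to the parity of $k$; consider for concreteness $k=2\ell$. A classical stationary-phase / Hensel-lifting argument (going back to Salié, see also the treatment in~\cite{IwKow}) shows that for $\gcd(na,p)=1$ one has $S(n,a;q)=p^\ell\sum_{t}e(f(t)/q)$, where the sum is over the (at most two, when $p$ is odd) solutions $t\bmod p^\ell$ of the congruence $t^2\equiv na\pmod{p^\ell}$, and the phase $f(t)$ is a simple explicit function, roughly $f(t)\approx 2t$ after normalisation. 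Thus $\sum_{n\le N}S(n,a;q)$ becomes, up to the factor $p^\ell=q^{1/2}$, a sum over $n$ of $e$ of the square root of $na$ modulo $p^\ell$ — that is, a sum that, after parametrising $n$ by the square-root variable $t$, collapses to an incomplete exponential sum of the shape $\sum_{t\in\mathcal T} e(g(t)/q)$ with $g$ a polynomial (quadratic, since $n=a^{-1}t^2$ feeds into the phase) and $\mathcal T$ a subset of residues determined by the range $1\le n\le N$.

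The first step, then, is to carry out this reduction carefully, keeping track of the cases $\gcd(n,p)>0$ separately (these contribute a negligible amount since there are only $O(N/p)$ such $n$ and each sum is trivially $O(q^{1/2+o(1)})$, or one recurses on a smaller modulus), and of the odd-versus-even $k$ dichotomy. The upshot should be that, up to acceptable error terms and the overall factor $q^{1/2}$, we must bound a complete or incomplete sum $\sum e(\psi(t)/p^{m})$ where $\psi$ is a fixed low-degree polynomial with at least one coefficient a $p$-adic unit, $t$ ranges over an arithmetic-progression-like set of size controlled by $N$, and $m$ is a definite fraction of $k$. Here is where the ideas from~\cite{SteShp1,SteShp2} enter: rather than bounding this sum by Weil-type square-root cancellation for each fixed modulus (which would only give back the trivial $q^{1/2}$ and no saving), one uses the flexibility in $k$ — i.e. that $k\ge k_0$ is large — to run a $p$-adic descent / self-improving argument. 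Concretely, one writes the variable $t=u+p^{r}v$ with $u$ running over residues mod $p^r$ and $v$ over a longer range, Taylor-expands the phase, and uses the resulting near-linear dependence on $v$ to sum the inner sum as a geometric-type series, gaining a factor of roughly $p$ (or a fixed power of $p$) over the trivial bound; iterating this $\asymp k$ times, or rather a fixed proportion of $k$ times, yields a saving $q^{-\tau}$ with $\tau$ depending only on $\lambda$.

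The role of the hypothesis $N\ge q^\lambda$ is to guarantee that the summation set $\mathcal T$ in the reduced sum is long enough — of size at least a fixed power of $q$ — so that the descent has room to operate: if $N$ were too short the incomplete sum could be as large as its length with no cancellation at all, and the product $Nq^{1/2}$ could not be beaten. One must check that after the reduction the effective length is $\gg N/q^{1/2}$ (roughly, since the square-root map is two-to-one and compresses the $n$-range), and that this is still $\gg q^{\lambda'}$ for some $\lambda'>0$ depending on $\lambda$; this is where $\lambda$ feeds into both $k_0$ and $\tau$. The main obstacle I anticipate is precisely the bookkeeping at the interface: ensuring that the explicit Kloosterman evaluation, the treatment of the $p\mid n$ terms, and the incomplete-sum descent all fit together with uniform (in $a$ and in $k\ge k_0$) constants, and in particular that the descent argument of~\cite{SteShp1,SteShp2} — originally designed for a somewhat different family of sums — genuinely applies to the quadratic phase with the arithmetic-progression constraint coming from the range of $n$, rather than to a sum over a full box. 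Once that adaptation is in place, summing the geometric gains and optimising the number of descent steps against the accumulated error terms gives the stated bound $\sum_{1\le n\le N}S(n,a;q)\ll_\lambda Nq^{1/2-\tau}$.
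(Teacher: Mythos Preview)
Your overall strategy --- invoke the explicit Sali\'e-type evaluation of $S(n,a;p^k)$ to reduce to an exponential sum, then exploit the prime-power structure via a $p$-adic argument --- is the right one, and matches the paper. The gap is in the reduction step. When you ``parametrise $n$ by the square-root variable $t$'' (i.e.\ set $n\equiv \bar a\,t^2\pmod q$), the phase $2l/q$ becomes $2t/q$, which is \emph{linear} in $t$, not quadratic; the nonlinearity lands entirely in the summation set $\mathcal T=\{t:\bar a\,t^2\bmod q\in[1,N]\}$, which is an unstructured union of residue classes, not an interval. A linear phase has no Weyl--Vinogradov structure to exploit, and your proposed descent (write $t=u+p^r v$, Taylor-expand, sum a near-geometric series) needs a genuinely nonlinear polynomial phase to produce any gain. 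If instead you complete the sum over $t$ to detect the condition $\bar a\,t^2\in[1,N]$, you get quadratic Gauss sums of size $q^{1/2}$ against the usual harmonic weights, and the resulting bound is only $\ll q^{1/2}\log q$ for the inner sum --- useless once $N<q^{1/2}$, so certainly not for arbitrary $\lambda>0$. (Your remark that the effective length is $\gg N/q^{1/2}$ also does not correspond to anything in either approach.)

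The paper resolves this by moving the nonlinearity from the range to the phase. One fixes a small power $p^s$ with $p^s\asymp N^{1/B}$, sorts $n$ into residue classes $n\equiv\vartheta\alpha\pmod{p^s}$, and writes $n=\vartheta\alpha+p^s t$ with $t$ running over a genuine interval $[1,T]$, $T\asymp N^{1-1/B}$. The square root $l$ of $na$ modulo $q$ is then computed by the $p$-adic binomial expansion $(1+\kappa p^s t)^{1/2}=\sum_i\binom{1/2}{i}(\kappa p^s t)^i$, which truncates to a polynomial $f(t)$ of degree $d=\lfloor k/s\rfloor\asymp_\lambda 1$. Now one faces $\sum_{t\le T}e(2\omega f(t)/q)$ with $f$ a fixed-degree polynomial and $t$ over an interval --- exactly the setting for Korobov's bound (Lemma~\ref{Main lemma} in the paper), which plays the role of your descent and delivers the saving $T^{-c/r^2}$. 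The remaining work is controlling $\ord_p f^{(u)}$ via Lemma~\ref{Upper bound for p-adic order} so that the congruence count $R$ in Korobov's theorem is small. The missing idea in your proposal, then, is this $p$-adic Taylor expansion of the square root: it is what converts the problem into a polynomial exponential sum over an interval, without which the descent machinery has nothing to act on.
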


Using that for any integers $a$, $m$ and $n$ with $\gcd(m,p)=1$ we have
$$
S(mn,a;q) = S(a,mn;q) = S(n,am;q).
$$
 Now we reformulate Theorem~\ref{Theorem on Kloosterman sums} in the form
in which we apply it in the proof of Theorem~\ref{Theorem on divisor problem}:

\begin{cor}
\label{cor:Av Kloost}
For any $q^{\lambda}\leq N\le q$ with $\lambda>0$, there exist constants $k_0$ and $\tau>0$, depending only on $\lambda$ such that
$$
\sum\limits_{1\leq n\leq N}S(mn,a;q) = \sum\limits_{1\leq n\leq N}S(a,mn;q)\ll_{\lambda}Nq^{1/2-\tau}
$$
holds uniformly for any integers $m,a$ satisfying $\gcd(ma,p)=1$ and any $q=p^k$
with $p$ an odd prime, $k\geq k_0$.
\end{cor}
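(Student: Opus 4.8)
The corollary is a purely formal consequence of Theorem~\ref{Theorem on Kloosterman sums} together with the reciprocity-type identity $S(mn,a;q)=S(a,mn;q)=S(n,am;q)$ already recorded in the excerpt. The plan is to fix integers $m,a$ with $\gcd(ma,p)=1$, set $a':=am$, and observe that $\gcd(a',p)=\gcd(a,p)=1$ since $\gcd(m,p)=1$. Then for every $n$ we have $S(mn,a;q)=S(n,am;q)=S(n,a';q)$, so that
\[
\sum_{1\leq n\leq N}S(mn,a;q)=\sum_{1\leq n\leq N}S(n,a';q).
\]
Now I would apply Theorem~\ref{Theorem on Kloosterman sums} directly to the right-hand side with $a'$ in place of $a$: since $q^{\lambda}\leq N\leq q$ and $\gcd(a',p)=1$, there are constants $k_0$ and $\tau>0$ depending only on $\lambda$ such that this sum is $\ll_{\lambda}Nq^{1/2-\tau}$ uniformly in $a'$ (hence uniformly in $m$ and $a$) and uniformly in $q=p^k$ with $p$ an odd prime and $k\geq k_0$.

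The only remaining point is the first equality $\sum_n S(mn,a;q)=\sum_n S(a,mn;q)$, which is immediate term by term from the symmetry $S(x,y;q)=S(y,x;q)$; this is visible directly from the defining sum by the substitution $b\mapsto \bar b$ over reduced residues modulo $q$. I should also note explicitly that the constant implied in $\ll_{\lambda}$ and the threshold $k_0$ are inherited unchanged from Theorem~\ref{Theorem on Kloosterman sums}, so in particular they do not depend on $m$ or $a$; uniformity over $m$ is exactly what the substitution $a'=am$ buys us, because the hypothesis $\gcd(a,p)=1$ in the theorem is stable under multiplication by a unit $m$ modulo $p$.

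There is essentially no obstacle here: the statement is a restatement of Theorem~\ref{Theorem on Kloosterman sums} in a twisted variable, and all work has already been done. If one wanted to be scrupulous, the only thing worth checking carefully is that the condition $\gcd(ma,p)=1$ is the correct hypothesis for the twist — it is, because we need $am$ to be a unit modulo $p^k$, and for prime power modulus this is equivalent to $p\nmid am$, i.e. $\gcd(ma,p)=1$.
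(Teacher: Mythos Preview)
Your proposal is correct and follows exactly the paper's own argument: the corollary is derived immediately from Theorem~\ref{Theorem on Kloosterman sums} via the identity $S(mn,a;q)=S(a,mn;q)=S(n,am;q)$, replacing $a$ by $a'=am$ and noting $\gcd(a',p)=1$. There is nothing to add.
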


\begin{remark}
Comparing with the  result of Khan~\cite{Khan}, in which the condition $k$ fixed and $p$ sufficiently large is required, Theorem~\ref{Theorem on divisor problem} gives a uniform result for all modulus of the type $q=p^k$ with $p$ an odd prime and $k$ sufficiently large.
\end{remark}

\begin{remark}
In Theorem~\ref{Theorem on Kloosterman sums}, since $\lambda>0$ can be taken arbitrary small, our result shows that Weil bound for sums of Kloosterman sums can be improved on average over a very short interval
for prime power modulus.
\end{remark}

\subsection{Notation} As usual, $\N$, $\Z$, $\R$ and $\Z_p$ are the set of natural numbers, integers, real numbers and $p$-adic integers, respectively.
We use $e(x)$ to denote $e^{2\pi i x}$ and $\fl{x}$ to denote the largest integer not exceeding $x$. For a prime number $p$ and any $n\in\Z$, $p^r\parallel n$ means $p^r|n$ and $p^{r+1}\nmid n$.

For a $p$-adic integer $\alpha\in\Z_p$, denote its $p$-adic order as $v_p (\alpha)$. For a polynomial $f(x)$ with integer coefficients, denote $\ord_p f$ as the $p$-adic order of the largest common divisor of all the coefficients of $f$ (that is, the largest power of $p$ which divides all the coefficients of $f$).

\section{Proof of Theorem~\ref{Theorem on divisor problem}}

We now assume that Theorem~\ref{Theorem on Kloosterman sums} holds,
and then prove it in Section~\ref{sec:KloostSum}.  In particular, here we use
Corollary~\ref{cor:Av Kloost}.

By the definition of $d(n)$, we have
$$
\sum\limits_{\substack{n\leq x\\n\equiv a\bmod q}}d(n)=\sum\limits_{\substack{uv\leq x\\uv\equiv a\bmod q}}1.
$$
Let $\varepsilon>0$ be sufficiently small and $\Delta=1+x^{-2\varepsilon}$. Suppose $U,V$ are parameters of the form $\Delta^{i}$ and $\Delta^{j}$ for $i,j\geq 0$, separately. Then we have
$$
\sum\limits_{\substack{uv\leq x\\uv\equiv a\bmod q}}1=\sum\limits_{U,V}\ \sum\limits_{\substack{uv\leq x\\uv\equiv a\bmod q\\U<u\leq \Delta U\\V<v\leq \Delta V}}1,
$$
where $\sum\limits_{U,V}$ ranges over all the pairs $U= \Delta^i$, $V = \Delta^j$ satisfying $UV\leq x$. The number of these pairs is at most $O\left(x^{4\varepsilon}\log^2 x\right)$.
Removing the condition $uv\leq x$ in the inner sum on the right hand side,
$$
\sum\limits_{\substack{uv\leq x\\uv\equiv a\bmod q}}1=\sum\limits_{U,V}\ \sum\limits_{\substack{uv\equiv a\bmod q\\U<u\leq \Delta U\\V<v\leq \Delta V}}1+O\left(\sum\limits_{\substack{x<n\leq x\Delta^2\\n\equiv a\bmod q}} d(n)\right).
$$
It is obvious that the error term is $O_\varepsilon\left(\frac{x^{1-\varepsilon}}{q}\right)$. We can restrict the range of the sum over  in the first term to $x^{1-2\varepsilon}<UV\leq x$
up to an acceptable error term, since
$$
\sum\limits_{\substack{U,V\\UV\leq x^{1-2\varepsilon}}}\ \sum\limits_{\substack{uv\equiv a\bmod q\\U<u\leq \Delta U\\V<v\leq \Delta V}}1\leq
\sum\limits_{\substack{n\leq x^{1-2\varepsilon}\Delta^2\\n\equiv a\bmod q}}d(n)\ll_\varepsilon\frac{x^{1-\varepsilon}}{q}.
$$
Hence we have
\begin{equation}\label{Expression for sum d(n)}
\sum\limits_{\substack{n\leq x\\n\equiv a\bmod q}}d(n)=\sum\limits_{\substack{U,V\\x^{1-2\varepsilon}\leq UV\leq x}}\
\sum\limits_{\substack{uv\equiv a\bmod q\\U<u\leq \Delta U\\V<v\leq \Delta V}}1+O_\varepsilon\left(\frac{x^{1-\varepsilon}}{q}\right).
\end{equation}

Now we smooth the inner sum over $u$ and $v$. Suppose $f$ and $g$ are smooth functions and compactly supported on the interval $[1,\Delta]$ with derivatives satisfying
$$
\ f^{(j)}\ll_j x^{6j\varepsilon}\quad \text{and}\quad  g^{(j)}\ll_j x^{6j\varepsilon}\quad \text{for any}\  j\geq 0
$$
and $f,g$ equals $1$ in the interval $[1+x^{-6\varepsilon},\Delta-x^{-6\varepsilon}]$. Replacing the $1$ in the inner sum on the right hand side of~\eqref{Expression for sum d(n)} by $f\left(\frac{u}{U}\right)g\left(\frac{v}{V}\right)$,
it is easy to prove that the contribution of the error terms produced in this process can be absorbed by the $O$-term. Then we have
$$
\sum\limits_{\substack{n\leq x\\n\equiv a\bmod q}}d(n)=\sum\limits_{\substack{U,V\\x^{1-2\varepsilon}\leq UV\leq x}}I(U,V;q,a)+O_\varepsilon\left(\frac{x^{1-\varepsilon}}{q}\right),
$$
where $I(U,V;q,a)$ is defined by
$$
I(U,V;q,a):=\sum\limits_{\substack{u,v\\uv\equiv a\bmod q}}f\left(\frac{u}{U}\right)g\left(\frac{v}{V}\right).
$$
By a similar argument, we can get
$$
\frac{1}{\varphi(q)}\sum\limits_{\substack{n\leq x\\\gcd(n,q)=1}}d(n)=\frac{1}{\varphi(q)}\sum\limits_{\substack{U,V\\x^{1-2\varepsilon}\leq UV\leq x}}I(U,V)+O_\varepsilon\left(\frac{x^{1-\varepsilon}}{q}\right),
$$
with $I(U,V)$ given by
$$
I(U,V):=\sum\limits_{\substack{u,v\\\gcd(uv,q)=1}}f\left(\frac{u}{U}\right)g\left(\frac{v}{V}\right).
$$
Thus we have
\begin{align*}
\sum\limits_{\substack{n\leq x\\n\equiv a\bmod q}}&d(n)-\frac{1}{\varphi(q)}\sum\limits_{\substack{n\leq x\\\gcd(n,q)=1}}d(n)\\
=&\sum\limits_{\substack{U,V\\x^{1-2\varepsilon}\leq UV\leq x}}\left(I(U,V;q,a)-\frac{1}{\varphi(q)}I(U,V)\right)+O_\varepsilon\left(\frac{x^{1-\varepsilon}}{q}\right).
\end{align*}
Now by the symmetry of $U$ and $V$, we only need to prove
$$
I(U,V;q,a)-\frac{1}{\varphi(q)}I(U,V)\ll_\varepsilon \frac{x^{1-\varepsilon}}{q}
$$
for any $U$ and $V$ satisfying
$$
x^{1-2\varepsilon}\leq UV\leq x\mand U\leq x^{1/2}.
$$

Thus, we now  fix $U$ and $V$ with this condition.

By the orthogonality of additive characters, we have
$$
I(U,V;q,a)=\frac{1}{q}\sum\limits_{h=1}^{q}e\left(\frac{-ah}{q}\right)\sum\limits_{\substack{u,v\\\gcd(u,q)=1}}
f\left(\frac{u}{U}\right)g\left(\frac{v}{V}\right)e\left(\frac{uvh}{q}\right).
$$
Denote the term for $h=q$ by
$$
\cM:=\frac{1}{q}\sum\limits_{\substack{u\\\gcd(u,q)=1}}
f\left(\frac{u}{U}\right)\sum\limits_{v}g\left(\frac{v}{V}\right).
$$
By the definition of $g$, the inner sum over $v$ is
$$
\sum\limits_{V<v\leq \Delta V}1+O\left(x^{-6\varepsilon}V\right)=(\Delta V-V)+O\left(1+x^{-6\varepsilon}V\right),
$$
which yields
\begin{equation}\label{Expression for mathcal M}
\cM=\frac{1}{q}\sum\limits_{\substack{U<u\leq \Delta U\\\gcd(u,q)=1}}
f\left(\frac{u}{U}\right)(\Delta V-V)+O_\varepsilon\left(\frac{U}{q}+\frac{x^{1-6\varepsilon}}{q}\right).
\end{equation}
Similarly, we have
$$
\frac{1}{\varphi(q)}I(U,V)=\frac{1}{\varphi(q)}\sum\limits_{\substack{U<u\leq \Delta U\\\gcd(u,q)=1}}f\left(\frac{u}{U}\right)
\sum\limits_{\substack{V<v\leq \Delta V\\ \gcd(v,q)=1}}1+O_\varepsilon\left(\frac{x^{1-6\varepsilon}}{q}\right).
$$
To remove the condition $\gcd(v,q)=1$ in the sum over $v$, we use the formula
$$
\sum\limits_{d|n}\mu(d)=\left\{
\begin{array}{ll}
1,\quad &\text{if}\ n=1,\\
0,\quad &\text{otherwise},
\end{array}
\right.
$$
and get
$$
\sum\limits_{\substack{V<v\leq \Delta V\\ \gcd(v,q)=1}}1=
\sum\limits_{d|q}\mu(d)\sum\limits_{\substack{V<v\leq \Delta V\\d|v}}1.
$$
It follows that
$$
\sum\limits_{\substack{V<v\leq \Delta V\\ \gcd(v,q)=1}}1=
\frac{\varphi(q)}{q}\left(\Delta V-V\right)+O_\varepsilon(q^{\varepsilon}),
$$
where we used
$$
\sum\limits_{d|q}\frac{\mu(d)}{d}=\frac{\varphi(q)}{q}.
$$
Thus we obtain
\begin{equation}\label{Expression for I(U,V)}
\begin{split}
\frac{1}{\varphi(q)}I(U,V)=\frac{1}{q}\sum\limits_{\substack{U<u\leq \Delta U\\\gcd(u,q)=1}}f\left(\frac{u}{U}\right)&(\Delta V-V)\\
&+O_\varepsilon\left(\frac{Uq^{\varepsilon}}{\varphi(q)} +\frac{x^{1-6\varepsilon}}{q}\right).
\end{split}
\end{equation}
Recall that $U\leq x^{1/2}$, then for sufficiently small $\varepsilon$, we get
$$
\cM-\frac{1}{\varphi(q)}I(U,V)\ll_\varepsilon\frac{x^{1-6\varepsilon}}{q}
$$
from~\eqref{Expression for mathcal M} and~\eqref{Expression for I(U,V)}.
Now we only need to estimate the sum
$$
\cE:=\frac{1}{q}\sum\limits_{h=1}^{q-1}e\left(\frac{-ah}{q}\right)\sum\limits_{\substack{u,v=-\infty\\\gcd(u,q)=1}}^\infty
f\left(\frac{u}{U}\right)g\left(\frac{v}{V}\right)e\left(\frac{uvh}{q}\right)
$$
and show that there exists an absolute constant $\sigma>0$ such that
\begin{equation}\label{Upper bound for for mathcal E}
\cE \ll \frac{x}{q^{1+\sigma}}
\end{equation}
holds uniformly for $q\leq x^{2/3+\sigma}$. Note that since the functions $f$ and $g$ are compactly supported, the sum over $u$ and $v$ is
actually finite.

Noting $q=p^k$ with $p$ an odd prime, write
$$
\cE=\frac{1}{q}\sum\limits_{0\leq r<k}\sum\limits_{\substack{1\leq h\leq p^k\\p^r\parallel h}}e\left(\frac{-ah}{q}\right)
\sum\limits_{\substack{u,v=-\infty\\\gcd(u,q)=1}}^\infty
f\left(\frac{u}{U}\right)g\left(\frac{v}{V}\right)e\left(\frac{uvh}{q}\right).
$$
It follows that
$$
\cE=\frac{1}{q}\sum\limits_{0\leq r<k}\ \sideset{}{^*}\sum\limits_{b\bmod p^{k-r}}e\left(\frac{-ab}{p^{k-r}}\right)
\sum\limits_{\substack{u,v=-\infty\\\gcd(u,q)=1}}^\infty
f\left(\frac{u}{U}\right)g\left(\frac{v}{V}\right)e\left(\frac{uvb}{p^{k-r}}\right).
$$
The inner sum for $u,v$ can be written as
$$
\mathcal{F}:=\sum\limits_{\substack{s,t\bmod p^{k-r}\\\gcd(t,p)=1}}e\left(\frac{stb}{p^{k-r}}\right)\sum\limits_{\substack{u\equiv t\bmod(p^{k-r})}}f\left(\frac{u}{U}\right)
\sum\limits_{v\equiv s\bmod(p^{k-r})}g\left(\frac{v}{V}\right).
$$
Applying Poisson summation (see~\cite[Lemma~2.1]{FKM}), it equals to
$$
\frac{UV}{p^{2(k-r)}}\sum\limits_{\substack{s,t\bmod p^{k-r}\\\gcd(t,p)=1}}e\left(\frac{stb}{p^{k-r}}\right)
\sum\limits_{m,n}e\left(\frac{sn+tm}{p^{k-r}}\right)\widehat{f}\left(\frac{nU}{p^{k-r}}\right)\widehat{g}\left(\frac{mV}{p^{k-r}}\right).
$$
Summing over $s$, we get
$$
\mathcal{F}=\frac{UV}{p^{k-r}}\sum\limits_{\substack{m,n\\  \gcd(n,p)=1}}e\left(-\frac{mn\overline{b}}{p^{k-r}}\right)
\widehat{f}\left(\frac{nU}{p^{k-r}}\right)\widehat{g}\left(\frac{mV}{p^{k-r}}\right),
$$
which gives
$$
\cE=\sum\limits_{0\leq r<k}\frac{UV}{p^{2k-r}}\sum\limits_{\substack{m,n\\ \gcd(n,p)=1}}
\widehat{f}\left(\frac{nU}{p^{k-r}}\right)\widehat{g}\left(\frac{mV}{p^{k-r}}\right)S(a,mn;p^{k-r}).
$$
By partial integration, the sums over $m$ and $n$ can be restricted to
\begin{equation}\label{Range of m,n}
|n|\leq \frac{x^{\varepsilon}p^{k-r}}{U}, \qquad  |m|\leq \frac{x^{\varepsilon}p^{k-r}}{V},
\end{equation}
up to an error term $O\(x^{-100}\)$.

Break the sum over $r$ into two sums
\begin{equation}\label{Breaking the sum over r}
\cE=\cE_1 + \cE_2,
\end{equation}
where
\begin{align*}
 \cE_1&= \sum\limits_{0\leq r<k/8}\frac{UV}{p^{2k-r}}\sum\limits_{\substack{m,n\\\gcd(n,p)=1}}
\widehat{f}\left(\frac{nU}{p^{k-r}}\right)\widehat{g}\left(\frac{mV}{p^{k-r}}\right)S(a,mn;p^{k-r}),\\
 \cE_2&= \sum\limits_{k/8\leq r<k} \frac{UV}{p^{2k-r}}\sum\limits_{\substack{m,n\\\gcd(n,p)=1}}
\widehat{f}\left(\frac{nU}{p^{k-r}}\right)\widehat{g}\left(\frac{mV}{p^{k-r}}\right)S(a,mn;p^{k-r}).
\end{align*}

For large $r$, we apply the Weil bound for Kloosterman sums (see~\cite[Corollary~11.12]{IwKow}) and derive
\begin{equation}\label{Contribution of large r}
 \cE_2\ll x^{2\varepsilon}p^{k/2}\sum\limits_{k/8\leq r<k}p^{-3r/2}\ll x^{2\varepsilon}p^{5k/16},
\end{equation}
which is small enough.

Now we only need to bound $\cE_1$.
Note that for $\gcd(a,p)=1$ and $p^j\parallel m$,
$$
S(a,m;p^{k})=\left\{
\begin{array}{ll}
\mu(p^k),\quad &{\text{if}}\ j\geq k,\\
0,\quad  &{\text{if}}\ 0<j<k.
\end{array}
\right.
$$
We have
$$
 \cE_1=\sum\limits_{0\leq r<k/8}\frac{UV}{p^{2k-r}}\sum\limits_{\substack{m,n\\ \gcd(mn,p)=1}}
\widehat{f}\left(\frac{nU}{p^{k-r}}\right)\widehat{g}\left(\frac{mV}{p^{k-r}}\right)S(a,mn;p^{k-r}).
$$
Our cancellation comes from the sum over $n$. By~\eqref{Range of m,n}, we only deal with
$$
\mathcal{G}:=\sum\limits_{\substack{1\leq n\leq \frac{x^{\varepsilon}p^{k-r}}{U}\\ \gcd(mn,p)=1}}\widehat{f}\left(\frac{nU}{p^{k-r}}\right)S(a,mn;p^{k-r}).
$$
The contribution of the part $n\leq -1$ can be treated similarly.
Denote the sums over
 $1\leq n\leq q^{1/10}$ and $n  > q^{1/10}$ by $\mathcal{G}_{n\leq q^{1/10}}$ and $\mathcal{G}_{n> q^{1/10}}$, respectively. Then Weil bound for Kloosterman sums  (see~\cite[Corollary~11.12]{IwKow}) gives
$$
\mathcal{G}_{n\leq q^{1/10}}\ll q^{1/10}(k-r+1)p^{\frac{k-r}{2}}.
$$
Denote the contribution of $\mathcal{G}_{n\leq q^{1/10}}$ to $\cE_1$ by $\mathfrak{C}_1$, then we have
\begin{equation}\label{Contribution of n small}
\mathfrak{C}_1\ll (k+1)x^{1/2}q^{1/10}\sum\limits_{0\leq r<k/8}p^{-\frac{k+r}{2}}\ll_\varepsilon x^{1/2}q^{-2/5}\mathcal{L}^2,
\end{equation}
which is acceptable. For $\mathcal{G}_{n>q^{1/10}}$, it follows from partial summation that
\begin{align*}
\mathcal{G}_{n>q^{1/10}}=\widehat{f}\left(q^{1/10}\right)&
\sum\limits_{q^{1/10}< n\leq x^{\varepsilon}p^{k-r}/U}S(a,mn;p^{k-r})\\
&+\int_{q^{1/10}}^{\frac{x^{\varepsilon}p^{k-r}}{U}}\sum\limits_{n\leq t}S(a,mn;p^{k-r})\left(\widehat{f}\left(\frac{tU}{p^{k-r}}\right)\right)^{\prime}dt.
\end{align*}
Note that $\left|\widehat f(t)^{\prime}\right|\leq 1$ for any $t\in \R$, then
$$
\left(\widehat{f}\left(\frac{tU}{p^{k-r}}\right)\right)^{\prime}\ll \frac{U}{p^{k-r}}.
$$

Now by Corollary~\ref{cor:Av Kloost}
there exists a constant $\rho>0$ (which does not depend on $\varepsilon$), such that
$$
\mathcal{G}_{n> q^{1/10}}\ll_\varepsilon \frac{x^{2\varepsilon}p^{k-r}q^{1/2-\rho}}{U}.
$$
Let $\mathfrak{C}_2$ denote the contribution of $\mathcal{G}_{n>q^{1/10}}$ to $\cE_1$, then we have
\begin{equation}\label{Contribution of n large}
\mathfrak{C}_2\ll_\varepsilon\sum\limits_{0\leq r<k/8}x^{3\varepsilon}q^{1/2-\rho}
p^{-r}\ll_\varepsilon x^{3\varepsilon}q^{1/2-\rho}.
\end{equation}
Since $\varepsilon$ is arbitrary, combining~\eqref{Breaking the sum over r}, \eqref{Contribution of large r}, \eqref{Contribution of n small}
and~\eqref{Contribution of n large} with~\eqref{Upper bound for for mathcal E}, we complete the proof of Theorem~\ref{Theorem on divisor problem}.

\section{Proof of Theorem~\ref{Theorem on Kloosterman sums}}
\label{sec:KloostSum}
\subsection{Preparations}

We start with the following well-known elementary statement.

\begin{lemma}\label{Basic lemma on P-adic order}
Let $p$ be a prime number and $n\in\N$, then we have
$$
\ord_p (n!)=\sum\limits_{j=1}^{\infty}\fl{\frac{n}{p^j}}.
$$
\end{lemma}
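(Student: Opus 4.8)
The final statement to prove is Lemma~\ref{Basic lemma on P-adic order}, the classical Legendre formula for the $p$-adic valuation of $n!$.

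\medskip

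\textbf{Approach.} The plan is to count the exact power of $p$ dividing $n! = 1\cdot 2\cdots n$ by grouping the factors according to how many times $p$ divides each of them. First I would write $\ord_p(n!) = \sum_{m=1}^{n} v_p(m)$, which holds because the valuation is additive on products. Then for each fixed $j\ge 1$, the number of integers $m$ in the range $1\le m\le n$ that are divisible by $p^j$ is exactly $\fl{n/p^j}$. Since $v_p(m) = \#\{\, j\ge 1 : p^j \mid m \,\}$, I can swap the order of summation:
$$
\ord_p(n!) = \sum_{m=1}^{n} v_p(m) = \sum_{m=1}^{n} \sum_{\substack{j\ge 1 \\ p^j \mid m}} 1 = \sum_{j\ge 1} \#\{\, m : 1\le m\le n,\ p^j\mid m \,\} = \sum_{j=1}^{\infty} \fl{\frac{n}{p^j}}.
$$
Note that the sum on the right is in fact finite, since $\fl{n/p^j}=0$ once $p^j > n$, so there are no convergence issues.

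\medskip

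Alternatively, one can argue by induction on $n$: the step from $n-1$ to $n$ adds exactly $v_p(n)$ to $\ord_p(n!)$, and one checks that $\sum_{j\ge1}\fl{n/p^j} - \sum_{j\ge1}\fl{(n-1)/p^j}$ equals the number of $j$ with $p^j\mid n$, which is $v_p(n)$; the base case $n=1$ gives $0=0$. I would present the double-counting version as the primary argument since it is cleaner and self-contained.

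\medskip

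\textbf{Main obstacle.} There is essentially no obstacle here: the only point requiring a moment's care is the interchange of the two summations and the observation that the double sum is genuinely finite, so that the manipulation is unconditionally valid. Everything else is bookkeeping, and the result is standard (Legendre's formula), included only because it is used in the sequel.
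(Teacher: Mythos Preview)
Your argument is correct; the double-counting proof you give is the standard derivation of Legendre's formula, and the remark about finiteness of the sum is all the justification needed for the interchange. The paper itself does not prove this lemma at all---it is simply stated as ``the following well-known elementary statement'' and used without proof---so there is nothing to compare against.
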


We also need the following technical result.

\begin{lemma}\label{Upper bound for p-adic order}
For every integer $i\geq 1$, let
$$
\binom{1/2}{i}:=\frac{1/2(1/2-1)\cdots(1/2-i+1)}{i!}
$$
and
$3\leq u\leq i$, then we have
$$
\nu_p\left(\binom{1/2}{i}\frac{i!}{(i-u)!}\right)\leq u\sum\limits_{j=1}^{\infty}\frac{1}{p^j}+E(i,p),
$$
with $$
|E(i,p)|\leq \frac{3\log(2i)}{\log p}.
$$
\end{lemma}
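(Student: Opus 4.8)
\textbf{Proof plan for Lemma~\ref{Upper bound for p-adic order}.}

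The plan is to write $\binom{1/2}{i}\frac{i!}{(i-u)!}$ explicitly as a ratio of integers (up to a power of $2$) and then estimate the $p$-adic order of each factor using Lemma~\ref{Basic lemma on P-adic order}. First I would observe that
$$
\binom{1/2}{i} = \frac{(1/2)(1/2-1)\cdots(1/2-i+1)}{i!}
= \frac{(-1)^{i-1}}{2^{2i-1}} \cdot \frac{1}{i}\binom{2i-2}{i-1},
$$
using the standard identity $\binom{1/2}{i} = \frac{(-1)^{i-1}}{2^{2i-1}(2i-1)}\binom{2i-1}{i}$ or an equivalent rearrangement; the precise algebraic form is routine bookkeeping, the point being that $\binom{1/2}{i}$ equals $2^{-(2i-1)}$ times a ratio of factorials. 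Since $p$ is odd, the power of $2$ contributes nothing to $\nu_p$, so $\nu_p\big(\binom{1/2}{i}\big)$ is bounded by the $p$-adic order of a quotient of factorials of size $O(i)$, and Lemma~\ref{Basic lemma on P-adic order} converts each such order into a sum of the form $\sum_j \lfloor \cdot/p^j \rfloor$.

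Next I would multiply by $\frac{i!}{(i-u)!} = i(i-1)\cdots(i-u+1)$, a product of $u$ consecutive integers each at most $i$. Combining everything, $\nu_p$ of the whole expression is a finite $\Z$-linear combination of terms $\lfloor m/p^j\rfloor$ with $m = O(i)$ and with total "weight" controlled: the $\frac{i!}{(i-u)!}$ factor should contribute the main term. Here I would use the elementary bound $\nu_p(i(i-1)\cdots(i-u+1)) = \sum_{j\geq 1}\big(\lfloor i/p^j\rfloor - \lfloor (i-u)/p^j\rfloor\big) \leq \sum_{j\geq 1}\big(\frac{u}{p^j} + 1\big)$, but the "$+1$" is only nonzero for $j$ with $p^j \leq i$, i.e.\ for at most $\log(2i)/\log p$ values of $j$; this is where the error term $E(i,p)$ with $|E(i,p)| \le 3\log(2i)/\log p$ comes from (the constant $3$ giving room to absorb the analogous contribution from the factorial pieces of $\binom{1/2}{i}$). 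The main term $u\sum_{j\geq 1} p^{-j}$ is exactly what is claimed.

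The main obstacle — really the only delicate point — is making sure the factorial pieces coming from $\binom{1/2}{i}$ itself (as opposed to the $\frac{i!}{(i-u)!}$ factor) do not contribute to the main term $u\sum_j p^{-j}$ but only to the error term. The worry is that $\binom{1/2}{i}$, written as $\frac{(2i-2)!}{2^{2i-1}\,((i-1)!)^2\, i}$ or similar, has $\nu_p$ that could in principle be as large as $O(\log i)$ (by Kummer's theorem it counts carries, which is $O(\log_p i)$), so it is genuinely of error-term size and not of size $O(u)$; I would need to check that $\nu_p\big(\binom{1/2}{i}\big) \le 2\log(2i)/\log p$ or so, which follows from Legendre's formula since $\nu_p(m!) = \frac{m - s_p(m)}{p-1} \le \frac{m}{p-1}$ and the central-binomial-type combination telescopes to something bounded by the number of base-$p$ digits. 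Once that is in hand, assembling the bound with the hypothesis $3 \le u \le i$ (which guarantees $i \ge 3$ so that $\log(2i) > 0$ and the geometric series manipulations are valid) is immediate.
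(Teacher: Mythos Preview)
Your plan is correct and follows essentially the same approach as the paper. The paper uses the explicit identity
\[
\binom{1/2}{i}=\frac{(-1)^{i-1}(2i-3)!}{2^{2i-2}\,i!\,(i-2)!}\qquad(i\ge 3),
\]
so that the $i!$ cancels against the numerator of $\dfrac{i!}{(i-u)!}$ and the whole expression becomes the single factorial ratio $\dfrac{(2i-3)!}{(i-2)!\,(i-u)!}$; one application of Legendre's formula then gives a sum $\sum_{j\le J}\bigl(\lfloor(2i-3)/p^j\rfloor-\lfloor(i-2)/p^j\rfloor-\lfloor(i-u)/p^j\rfloor\bigr)$ with $J=\log(2i-3)/\log p$, whose main contribution is $(u-1)\sum_{j\le J}p^{-j}$ and whose rounding errors contribute at most $3J$. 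Your plan differs only in that you keep $\nu_p\binom{1/2}{i}$ and $\nu_p\bigl(i!/(i-u)!\bigr)$ separate and bound them individually, whereas the paper merges them first; the paper's route is marginally cleaner (one Legendre computation instead of two, and the constant $3$ falls out without further bookkeeping), but both are the same argument.
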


\begin{proof}
Noting that
$$
\binom{1/2}{i}=\frac{(-1)^{i-1}(2i-3)!}{2^{2i-2}i!(i-2)!}\qquad  {\text{for}}\  i\geq 3,
$$
we have
$$
\binom{1/2}{i}\frac{i!}{(i-u)!}=(-1)^{i-1}\frac{(2i-3)!}{2^{2i-2}(i-2)!(i-u)!}
$$
for $i\geq 3$, then by Lemma~\ref{Basic lemma on P-adic order}, we have
$$
\nu_p\left(\frac{(2i-3)!}{(i-2)!(i-u)!}\right)=\sum\limits_{j=1}^{\infty}\left(\fl{\frac{2i-3}{p^j}}-\fl{\frac{i-2}{p^j}}-\fl{\frac{i-u}{p^j}}\right).
$$
Terms in the above sum vanish when $j>J$, where
$$
J = \frac{\log(2i-3)}{\log p},
$$
which yields
$$
\nu_p\left(\frac{(2i-3)!}{(i-2)!(i-u)!}\right)=(u-1)\sum\limits_{j=1}^{J}\frac{1}{p^j}+E(i,p),
$$
with $|E(i,p)|\leq 3 J \le 3\log(2i)/\log p$. Then the result follows from extending the range of the summation.
\end{proof}

\begin{lemma}\label{Special Kloosterman sums equals to 0}
Let $p$ be an odd prime and $k\geq 2$ be a positive integer. If $(a,p)=1$ and $p|n$, then the Kloosterman sums $S(n,a;p^k)=0$.
\end{lemma}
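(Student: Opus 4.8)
The plan is to run the standard \emph{first $p$-adic digit} cancellation argument for Kloosterman sums to prime-power moduli: a multiplicative substitution $b\mapsto b(1+p^{k-1}s)$ fixes the $nb$ term in the exponent (this is where the hypothesis $p\mid n$ gets used) but multiplies the $a\bar b$ term by a primitive additive character modulo $p$, and averaging over $s$ annihilates the sum.

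In detail, fix $s$ with $0\le s<p$. Since $1+p^{k-1}s$ is a unit modulo $p^k$, the map $b\mapsto b(1+p^{k-1}s)$ permutes the reduced residues modulo $p^k$, so $S(n,a;p^k)$ is unchanged under it. In the argument of $e(\,\cdot\,)$, because $p\mid n$ we have $nb(1+p^{k-1}s)=nb+nbsp^{k-1}\equiv nb\pmod{p^k}$, leaving the first term alone. For the second term I would use $(1+p^{k-1}s)^{-1}\equiv 1-p^{k-1}s\pmod{p^k}$ — valid precisely because $p^{2(k-1)}\equiv 0\pmod{p^k}$ when $k\ge2$ — to obtain $a\,\overline{b(1+p^{k-1}s)}\equiv a\bar b-ap^{k-1}s\bar b\pmod{p^k}$. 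Hence, for every such $s$,
$$
S(n,a;p^k)=\sideset{}{^*}\sum_{b\bmod p^k}e\left(\frac{nb+a\bar b}{p^k}\right)e\left(-\frac{as\bar b}{p}\right).
$$

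I would then sum this identity over $s=0,1,\dots,p-1$. The inner sum $\sum_{s=0}^{p-1}e(-as\bar b/p)$ equals $0$ unless $p\mid a\bar b$, and since $\gcd(a,p)=\gcd(b,p)=1$ it is always $0$. Therefore $p\,S(n,a;p^k)=0$, so $S(n,a;p^k)=0$, as claimed.

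I do not expect any genuine obstacle here; this is a routine computation. The only points to be careful about are: that $b\mapsto b(1+p^{k-1}s)$ is a bijection of the unit group mod $p^k$; that the truncated inverse $(1+p^{k-1}s)^{-1}\equiv 1-p^{k-1}s\pmod{p^k}$ holds, which needs $k\ge2$; and that the hypothesis $p\mid n$ is invoked at exactly the right step. An equivalent route is to write each unit $b$ as $c+p^{k-1}t$ with $c$ ranging over the reduced residues mod $p^{k-1}$ and $t$ over $\{0,\dots,p-1\}$, expand $\bar b\equiv\bar c-p^{k-1}t\bar c^{\,2}\pmod{p^k}$, observe $e(nb/p^k)=e(nc/p^k)$ since $p\mid n$, and sum the resulting complete additive character sum in $t$; oddness of $p$ is not really needed for $k\ge2$, but assuming it costs nothing.
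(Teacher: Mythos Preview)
Your proof is correct. The paper argues additively rather than multiplicatively: writing $n=p^{r}m$ with $\gcd(m,p)=1$, it disposes of the case $r\ge k$ as a Ramanujan sum, and for $1\le r<k$ decomposes each unit as $b=y+p^{k-r}x$ with $y$ running over reduced residues modulo $p^{k-r}$ and $x$ modulo $p^{r}$; since $p^{r}m\,\overline{y+p^{k-r}x}\equiv p^{r}m\,\overline{y}\pmod{p^{k}}$, the sum factors and the inner sum $\sum_{x\bmod p^{r}}e(ax/p^{r})$ vanishes. Your multiplicative substitution $b\mapsto b(1+p^{k-1}s)$ is a bit cleaner in that it avoids the case split on $v_{p}(n)$ and uses only the hypothesis $p\mid n$; the paper's version carries the exact valuation $r$ into the decomposition, which is not needed for the vanishing but makes the cancellation happen over $p^{r}$ rather than over $p$. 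The ``equivalent route'' you sketch at the end, writing $b=c+p^{k-1}t$, is precisely the paper's argument specialised to $r=1$.
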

\begin{proof}
By assumption, we may suppose $n=p^rm$ with $(m,p)=1$ and $r\geq 1$. If $r\geq k$, then $S(n,a;p^k)=S(0,a;p^k)$ is a Ramanujan sum and equals to $0$,
since $(a,q)=1$ and $k\geq 2$. If $1\leq r<k$, noting
$$
S(n,a;p^k)=\sideset{}{^*}\sum\limits_{{b \bmod p^k}} e\left(\frac{p^r m \overline{b} + ab}{p^k}\right),
$$
we have
$$
S(n,a;p^k)=\sideset{}{^*}\sum\limits_{{y \bmod p^{k-r}}} \sum_{x \bmod p^r} e\left(\frac{p^r m\ \overline{y + p^{k-r} x} + a(y + p^{k-r} x)}{p^k}\right).
$$
Summing over $x$, we get
$$
S(n,a;p^k)=\sideset{}{^*}\sum\limits_{{y \bmod p^{k-r}}} e\left(\frac{p^r m y^{-1}+ay}{p^k}\right) \sum_{x \bmod p^r} e\left(\frac{a x}{p^r}\right)=0,
$$
which concludes the proof.
\end{proof}

Let $\Re\, z$ denote the real part of a complex number  $z$.
\begin{lemma}\label{Expression for Kloosterman sums with prime power moduli}
For $\gcd(a,q)=1$ and $q=p^k$ with $k\geq 2$, we have
$$
S(n,a;q)=\left\{
\begin{array}{ll}
2\left(\frac{l}{p}\right)^kq^{1/2}\Re\, \vartheta_q e\left(\frac{2l}{q}\right),\quad  &{\text{if}}\left(\frac{na}{p}\right)=1,\\
0,\quad &{\text{if}}\left(\frac{na}{p}\right)=-1,
\end{array}
\right.
$$
where $l^2\equiv na \bmod q$, $\left(\frac{l}{p}\right)$ is the Legendre symbol, $\vartheta_q$ equals $1$ if $q\equiv1 \bmod 4$ and $i$ if $q\equiv3 \bmod 4$.
\end{lemma}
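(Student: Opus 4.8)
\emph{Proof proposal.} The plan is to evaluate $S(n,a;q)$ by hand, carrying out the $p$-adic stationary phase method on the filtration of $(\Z/p^k\Z)^*$ by the subgroups $1+p^j\Z/p^k\Z$. I may assume $\gcd(n,p)=1$, since if $p\mid n$ then $\left(\frac{na}{p}\right)=0$ and $S(n,a;q)=0$ already by Lemma~\ref{Special Kloosterman sums equals to 0}. Set $h=\fl{k/2}$ and write every reduced residue $b$ mod $p^k$ uniquely as $b=c(1+p^ht)$, where $c$ runs over representatives of the $\varphi(p^h)$ cosets of $1+p^h\Z/p^k\Z$ (equivalently, over reduced residues mod $p^h$) and $t$ runs mod $p^{k-h}$. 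Using $(1+p^ht)^{-1}\equiv1-p^ht+p^{2h}t^2\pmod{p^k}$, and noting $p^{2h}\equiv0\pmod{p^k}$ exactly when $k$ is even, I get
$$
nb+a\overline b\equiv(nc+a\overline c)+p^ht(nc-a\overline c)+\delta\,p^{2h}t^2a\overline c\pmod{p^k},\qquad\delta=\begin{cases}0,&k\text{ even},\\1,&k\text{ odd}.\end{cases}
$$
In either case, summing $e(\cdot/p^k)$ over $t$ kills every $c$ with $p^h\nmid nc-a\overline c$, so the surviving $c$ are exactly those with $nc^2\equiv a\pmod{p^h}$; this is solvable iff $\left(\frac{na}{p}\right)=1$, and then there are precisely two such cosets, represented by $\pm\widetilde c$ with $\widetilde c$ a Hensel lift satisfying $n\widetilde c^{2}\equiv a\pmod{p^k}$ (so that $a\overline{\widetilde c}\equiv n\widetilde c\pmod{p^k}$). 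If $\left(\frac{na}{p}\right)=-1$, the $t$-sum vanishes for every $c$ and $S(n,a;q)=0$.

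For $k=2h$ the $t$-sum is purely linear, $\sum_{t\bmod p^h}e\big(t(nc-a\overline c)/p^h\big)$, so each of the two surviving cosets contributes $p^he\big((nc+a\overline c)/p^k\big)=p^he\big(2n\widetilde c/p^k\big)$ — a quantity I will check is independent of the representative. Putting $l=n\widetilde c$, so $l^2\equiv na\pmod q$, the two cosets give $l$ and $-l$, whence $S(n,a;q)=p^h\big(e(2l/q)+e(-2l/q)\big)=2q^{1/2}\Re\,e(2l/q)$; since $q=p^{2h}\equiv1\pmod4$ and $\left(\frac{l}{p}\right)^{2h}=1$, this matches the statement with $\vartheta_q=1$.

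For $k=2h+1$ I split $t=y+px$ with $y\bmod p$, $x\bmod p^h$. The $x$-sum again imposes $p^h\mid nc-a\overline c$; for a surviving coset, writing $nc-a\overline c=p^hw$, the $y$-sum is the quadratic Gauss sum $\sum_{y\bmod p}e\big((a\overline c\,y^2+wy)/p\big)$. Choosing the representative $\widetilde c$ as above forces $w\equiv0\pmod p$, so this Gauss sum collapses to $\left(\frac{a\overline{\widetilde c}}{p}\right)\varepsilon_p\,p^{1/2}$, where $\varepsilon_p=1$ for $p\equiv1\pmod4$ and $\varepsilon_p=i$ for $p\equiv3\pmod4$; from $n\widetilde c^{2}\equiv a$ one gets $\left(\frac{a\overline{\widetilde c}}{p}\right)=\left(\frac{l}{p}\right)$ with $l=n\widetilde c$, while $e\big((nc+a\overline c)/p^k\big)=e(2l/q)$. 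Each coset thus contributes $q^{1/2}\left(\frac{l}{p}\right)\varepsilon_p\,e(2l/q)$ (using $p^{h+1/2}=q^{1/2}$), and the two cosets $\pm\widetilde c$ together give $S(n,a;q)=q^{1/2}\left(\frac{l}{p}\right)\varepsilon_p\left(e(2l/q)+\left(\frac{-1}{p}\right)e(-2l/q)\right)$. Separating $p\equiv1$ and $p\equiv3\pmod4$ — in which cases $q=p^k\equiv p\pmod4$, so $\vartheta_q=\varepsilon_p$ — and using $\left(\frac{l}{p}\right)^k=\left(\frac{l}{p}\right)$ for odd $k$, this collapses to $2\left(\frac{l}{p}\right)^kq^{1/2}\Re\,\vartheta_q e(2l/q)$. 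I expect the odd case to be the main obstacle: correctly isolating the quadratic term $p^{2h}t^2a\overline c$, choosing the critical point $\widetilde c$ lifted all the way up to modulus $p^k$ so that the residual phase of the Gauss sum disappears, and then reconciling the Gauss-sum constant $\varepsilon_p$ and the sign $\left(\frac{-1}{p}\right)$ with the factors $\vartheta_q$ and $\left(\frac{l}{p}\right)^k$ in the statement.
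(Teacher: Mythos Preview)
Your argument is correct: this is the standard $p$-adic stationary phase computation, and you have carried out both the even and odd cases cleanly, including the reconciliation of the Gauss-sum constant $\varepsilon_p$ with $\vartheta_q$ and of $\left(\frac{-1}{p}\right)$ with the sign change $l\mapsto-l$. The one point that deserves a sentence of care is the choice of coset representative in the odd case: since $e\big((nc+a\bar c)/p^k\big)$ is \emph{not} invariant under $c\mapsto c(1+p^hs)$ when $k=2h+1$, it is essential (and sufficient) that you compute with the specific Hensel lift $\widetilde c$ rather than an arbitrary representative; you do this, but it is worth saying explicitly why that choice is both allowed and necessary.

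By contrast, the paper does not prove this lemma at all: it simply cites~\cite[Equation~(12.39)]{IwKow}. So your write-up is not merely a different route but a genuine proof where the paper gives only a reference. Your approach is, in fact, exactly the method behind the cited formula in Iwaniec--Kowalski, so nothing is gained or lost mathematically; what you buy is self-containment.
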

\begin{proof}
This is~\cite[Equation~(12.39)]{IwKow}.
\end{proof}

\begin{lemma}\label{Main lemma}
Suppose that $d,\mu\in \N$, $d\geq 300$, $ \mu\geq d+1$, $ \beta=\fl{\mu/10}+1$, $ f(X)=a_1X+\cdots+a_{d+1}X^{d+1}\in \Z[X]$. Let $r$ be defined by the relation $P^r=p^{\mu}$ and $\mu\log p>10^{8}rd\log d$. Then if $1\leq r\leq d/300$, there exists an absolute constant $c>10^{-13}$, such that
$$
\left|\sum\limits_{1\leq x\leq P}e\left(\frac{f(x)}{p^{\mu}}\right)\right|\leq 3P^{1-c/r^{2}}+nR,
$$
where $R$ is the maximum number of solutions of the congruence
$$
f^{(u)}(x)\equiv 0\ \bmod p^{\beta},\indent 1\leq x\leq P,
$$
for $25r\leq u\leq 27r$.
\end{lemma}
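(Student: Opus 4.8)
The plan is to treat this as a Weyl--Vinogradov type estimate for a short exponential sum with prime-power denominator, the key point being to exploit the commensurability $P^{r}=p^{\mu}$ to bring the \emph{effective} degree down from $d+1$ to $O(r)$ before applying any differencing. First I would split the summation variable $p$-adically, writing $x=y+p^{s}z$ with $s$ a suitable small multiple of $\mu/r$, so that $y$ runs over a short $p$-adic block and $z$ over the rest of $[1,P]$. Taylor expansion gives
$$f(y+p^{s}z)=\sum_{i\ge 0}\frac{p^{is}}{i!}\,f^{(i)}(y)\,z^{i},$$
and since $p$ is odd, Lemma~\ref{Basic lemma on P-adic order} bounds $\ord_p(i!)$ for $1\le i\le d+1$ by $O(d)$, which is negligible next to $\mu$ under the hypothesis $\mu\log p>10^{8}rd\log d$; hence modulo $p^{\mu}$ only the terms of index at most roughly $27r$ survive, and the inner sum over $z$ becomes an exponential sum, to modulus $p^{\mu}$, of a polynomial in $z$ of degree at most about $27r$. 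This collapse of the degree is what will ultimately produce a $1/r^{2}$-type saving rather than the $1/2^{d}$ of the plain Weyl inequality; the term $nR$ is there to absorb the degenerate configurations, discussed below, where the reduction produces a polynomial whose coefficients are all too divisible by $p$ to yield any cancellation.

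Next I would run a van der Corput / Weyl differencing on this low-degree polynomial, iterating $u$ times with $u$ in the window $25r\le u\le 27r$ --- the natural number of steps needed to exhaust a polynomial of this degree --- and at every step taking the difference increment along an appropriate power of $p$, so that the degree of the polynomial and the exponent of the modulus contract together instead of the modulus being wasted. After the $u$ steps the sum is reduced to an average of complete exponential sums modulo a power of $p$ attached to $f^{(u)}$, with lower-order shifts. In the generic branch the leading coefficient is a $p$-adic unit on a positive proportion of residue classes, so the classical Gauss-sum evaluation --- valid precisely because $p$ is odd, which is the same input that powers Lemma~\ref{Expression for Kloosterman sums with prime power moduli} --- gives square-root cancellation and feeds the main term $3P^{1-c/r^{2}}$. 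In the degenerate branch one stops and estimates trivially, and the total contribution is then $O(1)$ times the number of $x\in[1,P]$ with $f^{(u)}(x)\equiv 0\pmod{p^{\beta}}$, that is $O(1)\cdot R$.

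What remains is bookkeeping: following $\ord_p$ of the coefficients of all the intermediate polynomials, checking that the modulus never drops below $p^{\beta}=p^{\fl{\mu/10}+1}$ before the final step, and verifying that the product of all incidental losses --- the powers of two from the Cauchy--Schwarz/differencing steps, the factorial corrections, and the $O(r)$ degree factors collected when counting bad residue classes --- is at most $P^{c/r^{2}}$. This is exactly where the quantitative hypotheses $d\ge 300$, $r\le d/300$, $\mu\ge d+1$ and $\mu\log p>10^{8}rd\log d$ are consumed, and it is what pins down an admissible absolute constant $c>10^{-13}$.

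I expect this last accounting to be the main obstacle: through roughly $26r$ iterations one must keep three things true at once --- the modulus must not collapse to $p^{\beta}$ prematurely, the leading coefficient of each derived polynomial must stay $p$-adically small enough for the Gauss-sum step to fire, and the cumulative loss must not exceed $P^{c/r^{2}}$ --- and the values $\beta=\fl{\mu/10}+1$ and $u\in[25r,27r]$ are evidently chosen to leave just enough slack for all three. (Lemma~\ref{Upper bound for p-adic order}, which concerns the binomial coefficients $\binom{1/2}{i}$, is not needed inside this argument; it enters only when Lemma~\ref{Main lemma} is later applied to the polynomial coming from the $p$-adic square root $\sqrt{na}$ in the Kloosterman-sum formula.)
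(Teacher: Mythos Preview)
The paper does not prove this lemma at all: its entire proof reads ``This is \cite[Theorem~2]{Korobov}.'' So there is no argument in the paper to compare your proposal against --- the result is imported as a black box from Korobov's 1969 paper on double trigonometric sums.

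Your sketch is a plausible outline of how a Korobov-type theorem is established (the $p$-adic splitting $x=y+p^{s}z$ to collapse the effective degree from $d+1$ to $O(r)$, followed by iterated Weyl differencing along powers of $p$, Gauss-sum evaluation in the nondegenerate branch, and a trivial bound in the degenerate branch producing the $R$-term). Whether this matches Korobov's actual proof in detail is not something the present paper addresses or needs. If your aim is to match the paper, a one-line citation suffices; if your aim is an independent proof, be aware that the bookkeeping you flag as the ``main obstacle'' is genuinely delicate --- Korobov's argument is somewhat intricate, and your sketch, while pointing in the right direction, does not yet pin down the choice of the step sizes $p^{s}$ at each iteration or verify that the window $25r\le u\le 27r$ and the threshold $\beta=\fl{\mu/10}+1$ emerge from the accounting rather than being assumed.
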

\begin{proof}
This is~\cite[Theorem~2]{Korobov}.
\end{proof}

\begin{lemma}\label{Konyakin's lemma}
Suppose $f(X)=a_0+a_1X+\cdots+a_dX^d\in \Z[X]$ with the coefficients satisfying $\gcd(a_0,..., a_d,m)=1$.
Let $\rho(f,m)$ be the number of solutions of the congruence
$$
f(x)\equiv 0\ \bmod m.
$$
Then for $d\geq 2$, we have
$$
\rho(f,m)\leq c_dm^{1-1/d},
$$
where $c_d=d/e+O(\log^2 d)$ with $e$ being the base of the natural logarithm.
\end{lemma}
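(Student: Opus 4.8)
The plan is to reduce the problem to prime‑power moduli, handle each prime power by an induction on the exponent, and then recombine — the recombination being the one step that is genuinely delicate. (This is a theorem of Konyagin, so in the paper one may of course simply cite it; what follows is the shape of the argument one would reconstruct.)

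First I would invoke the Chinese Remainder Theorem: writing $m=\prod_i p_i^{a_i}$ one has $\rho(f,m)=\prod_i\rho(f,p_i^{a_i})$, and the hypothesis $\gcd(a_0,\dots,a_d,m)=1$ descends to each factor, so $\bar f$ is a nonzero polynomial of degree $\le d$ over every residue field $\F_{p_i}$. The crux is that one cannot afford to multiply individual bounds of the crude shape $\rho(f,p^a)\le c_d\,p^{a(1-1/d)}$, since that produces $c_d^{\omega(m)}m^{1-1/d}$ with an unacceptable dependence on the number $\omega(m)$ of prime divisors of $m$; one must instead think of the total $p$‑adic ``budget'' $\sum_i a_i\log p_i$ as being distributed among the primes and bound the product directly.

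For the prime‑power estimate I would induct on $a$. Lagrange's theorem gives $\bar f$ at most $d$ roots modulo $p$, hence $\rho(f,p^a)\le d\,p^{a-1}$, which is already the assertion when $a\le d$. For $a>d$ the gain comes from the derivative: group the solutions modulo $p^a$ according to their reductions modulo a suitable intermediate power $p^b$ and stratify them by the valuation $v_p\bigl(f'(x)\bigr)$. A Hensel‑type lifting argument shows that a solution at which $f'$ has small valuation admits essentially a unique lift at each subsequent level, whereas the locus on which $v_p(f')$ is large is contained in the solution set of $f'(x)\equiv0$ modulo a power of $p$, and then of $f''(x)\equiv0$, and so on — all congruences of degree $<d$, to which Lagrange again applies. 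Iterating this and choosing the intermediate exponents optimally yields the exponent $1-1/d$.

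The hard part is the constant $c_d=d/e+O(\log^2 d)$, not the exponent. Getting it requires solving the budget‑allocation step as an (essentially extremal) optimization — this is where explicit counts of $p$‑adic valuations, in the spirit of Lemma~\ref{Basic lemma on P-adic order}, have to be carried out with care — together with matching lower‑bound constructions showing that $d/e$ plus a lower‑order term is best possible. I expect this optimization to be the main obstacle; everything else is routine bookkeeping built on Lagrange's theorem for $f$ and its successive derivatives. It is worth noting that for the applications in this paper a bound of the much cruder form $\rho(f,m)\ll_d m^{1-1/d}$, or even one weakened by a factor $m^{o(1)}$, would already suffice, so only the qualitative structure of the argument — not the sharp constant — is actually used downstream.
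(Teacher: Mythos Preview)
Your instinct is exactly right: in the paper this lemma is not proved at all but is simply quoted as ``the main result of~\cite{Konyagin}''. So your parenthetical remark --- that in the paper one may simply cite it --- is precisely what happens, and everything you wrote after that goes well beyond what the paper itself provides.

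As for the sketch you give of Konyagin's argument, the broad strokes are correct: multiplicativity via CRT, the prime-power case handled by induction on the exponent using a Hensel-type stratification by $v_p(f')$ (and higher derivatives), and the observation that the sharp constant $d/e+O(\log^2 d)$ comes from an extremal/optimization step rather than from the exponent argument itself. Your warning that one cannot simply multiply crude prime-power bounds (picking up $c_d^{\omega(m)}$) is also the right point. You are also correct that downstream the paper only uses the prime-power case via Lemma~\ref{Stepannov and Shparlinski's lemma}, where any bound of the shape $\rho(f,p^\mu)\ll_d p^{\mu(1-1/d)}$ would suffice; the precise constant $d/e$ plays no role in the application. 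So nothing in your proposal is wrong --- it is just far more than the paper attempts.
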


\begin{proof}
This is  the main result of~\cite{Konyagin}.
\end{proof}

\begin{lemma}\label{Stepannov and Shparlinski's lemma}
Let $Q,\mu$ be positive integers, $p$ be a prime number.
Suppose $f(X)=a_0+a_1X+\cdots+a_dX^d\in \Z[X]$ with the coefficients satisfying $\gcd(a_0,..., a_d,p)=1$.
 Then for the number of solutions $R(Q,p^{\mu})$ of the congruence
$$
f(x)\equiv 0\ \bmod p^{\mu},\indent 1\leq x\leq Q,
$$
the estimate
$$
R(Q,p^{\mu})\ll d(Qp)^{1-1/d}+dQp^{-\mu/d}
$$
holds , where the implied constant in $\ll$ is absolute.
\end{lemma}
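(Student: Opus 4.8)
The plan is to reduce the count of solutions in the short interval $1\le x\le Q$ to a count in a full residue system, to which Lemma~\ref{Konyakin's lemma} applies, after choosing the modulus optimally. We may assume $d\ge 2$: when $d=1$ the congruence $f(x)\equiv 0\bmod p^\mu$ has no solution if $p\mid a_1$ (since then $p\nmid a_0$) and exactly one solution modulo $p^\mu$ otherwise, so in all cases $R(Q,p^\mu)\le Qp^{-\mu}+1$, which is the desired bound for $d=1$.

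I would next dispose of the range $Q<p$. Here $f(x)\equiv 0\bmod p^\mu$ forces $f(x)\equiv 0\bmod p$, and since $\gcd(a_0,\dots,a_d,p)=1$ the reduction $\bar f\in\F_p[X]$ is a nonzero polynomial of degree at most $d$, hence has at most $d$ roots in $\F_p$. As the integers $1\le x\le Q<p$ lie in distinct residue classes modulo $p$, we get $R(Q,p^\mu)\le d\le d(Qp)^{1-1/d}$, since $Qp\ge p\ge 2$.

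Now assume $Q\ge p$ and set $\nu=\min\bigl(\mu,\fl{\log Q/\log p}\bigr)$, so that $1\le\nu\le\mu$ and $p^\nu\le Q$; since $p^\nu\mid p^\mu$ we have $R(Q,p^\mu)\le R(Q,p^\nu)$. Partition $[1,Q]$ into $\rf{Q/p^\nu}$ consecutive blocks of $p^\nu$ integers each; in each block the number of $x$ with $f(x)\equiv 0\bmod p^\nu$ is at most $\rho(f,p^\nu)$, and $\gcd(a_0,\dots,a_d,p^\nu)=1$, so Lemma~\ref{Konyakin's lemma} gives $\rho(f,p^\nu)\le c_d\,p^{\nu(1-1/d)}$ with $c_d\ll d$ (an absolute implied constant). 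Hence
\[
R(Q,p^\mu)\le\bigl(Qp^{-\nu}+1\bigr)c_d\,p^{\nu(1-1/d)}=c_d\bigl(Qp^{-\nu/d}+p^{\nu(1-1/d)}\bigr).
\]
It remains to estimate the two terms according to which of $\mu$ and $\fl{\log Q/\log p}$ is the smaller. If $\nu=\mu$ (that is, $p^\mu\le Q$), then $p^{\nu(1-1/d)}=p^\mu p^{-\mu/d}\le Qp^{-\mu/d}$, so $R(Q,p^\mu)\le 2c_d\,Qp^{-\mu/d}$. If instead $\nu=\fl{\log Q/\log p}<\mu$, then $Q/p<p^\nu\le Q$, whence $p^{-\nu/d}<(p/Q)^{1/d}$ and thus $Qp^{-\nu/d}<Q^{1-1/d}p^{1/d}\le(Qp)^{1-1/d}$, using $p^{1/d}\le p^{1-1/d}$ for $d\ge 2$; also $p^{\nu(1-1/d)}\le Q^{1-1/d}\le(Qp)^{1-1/d}$, so $R(Q,p^\mu)\le 2c_d(Qp)^{1-1/d}$. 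Combining all cases and using $c_d\ll d$ absolutely gives $R(Q,p^\mu)\ll d(Qp)^{1-1/d}+dQp^{-\mu/d}$.

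I do not expect a genuine obstacle here; the only real decision is the choice of the auxiliary exponent $\nu$, which is pinned down by three competing requirements: we need $p^\nu\le Q$ so that at least one full block of length $p^\nu$ fits inside $[1,Q]$ (otherwise the block count is vacuous), there is nothing to gain from taking $\nu>\mu$, and Lemma~\ref{Konyakin's lemma} requires $p^\nu\ge 2$, i.e.\ $\nu\ge 1$. This last constraint is exactly what forces the separate (elementary) treatment of the range $Q<p$.
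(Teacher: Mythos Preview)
Your proof is correct and follows essentially the same route as the paper: apply Konyagin's bound $\rho(f,p^\nu)\ll d\,p^{\nu(1-1/d)}$ with the exponent $\nu$ chosen as $\mu$ when $Q\ge p^\mu$ and as (roughly) $\log Q/\log p$ when $Q<p^\mu$. The only cosmetic difference is that the paper takes the ceiling $\omega$ with $p^{\omega-1}<Q\le p^\omega$ and bounds $R(Q,p^\omega)\le R(p^\omega,p^\omega)$ directly, whereas you take the floor $\nu$ and use the block count $\lceil Q/p^\nu\rceil$; also, you treat the edge cases $d=1$ and $Q<p$ explicitly, which the paper glosses over.
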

\begin{proof}
By Lemma~\ref{Konyakin's lemma}, we have $R(p^{\mu},p^{\mu})\ll dp^{\mu-\mu/d}$. Then for $Q\geq p^{\mu}$,
$$
R(Q,p^\mu)\leq R(p^{\mu},p^\mu)\left(\frac{Q}{p^\mu}+1\right)\ll dQp^{-\mu/d}.
$$
If $Q<p^{\mu}$, there exists a unique non-negative integer $\omega$ such that $p^{\omega-1}<Q\leq p^\omega$. It is clear that $\omega\leq \mu$ and $p^{\omega}\leq pQ$, which yields
$$
R(Q,p^{\mu})\leq R(Q,p^{\omega})\leq R(p^{\omega},p^{\omega})\ll dp^{\omega-\omega/d}\leq d(Qp)^{1-1/d}.
$$
Now the result follows from the above two estimates.
\end{proof}

\begin{remark}
We remark that for a fixed $d$, Konyagin and Steger~\cite{KoSte} give stronger estimates on $R(Q,p^{\mu})$
than that of Lemma~\ref{Stepannov and Shparlinski's lemma},
but we prefer to us to keep the dependence on $d$ explicit. This maybe useful if one needs to derive a version
of  Theorem~\ref{Theorem on Kloosterman sums} with $\lambda$ which is a slowly decreasing function of $q$.
\end{remark}

\subsection{Concuding the proof}

Let $\gcd(a,p)=1$, $q=p^k$ with $p$ an odd prime and $k\geq 2$ a positive integer. For a given $\lambda>0$, we may suppose $10\leq q^{\lambda}\leq N\leq q$ without loss of generality, and consider the upper bound of the sum
$$
S(N):=\sum\limits_{1\leq n\leq N}S(mn,a;q).
$$

Take
\begin{equation}
\label{Def of s}
s:=\fl{\frac{\log N}{B\log p}}
\end{equation}
with a sufficiently large constant  $B >0$ (depending on $\lambda$) and $T:=\fl{N/p^s}$.

Then Weil bound for Kloosterman sums  (see~\cite[Corollary~11.12]{IwKow}) gives
$$
S(N)=S(p^{s}T)+O(p^sd(q)q^{1/2}).
$$
When $q^{\lambda}\leq N\leq q$, the $O$-term can be estimated trivially as
$$
p^sd(q)q^{1/2}\ll qN^{1/B}\ll Nq^{1/2-(1-1/B)\lambda},
$$
which is small enough, hence we only need to bound $S(p^{s}T)$. By Lemma~\ref{Special Kloosterman sums equals to 0}, the sum over $n$ with $p|n$ vanishes, thus
$$
S(p^sT)=\sum\limits_{\substack{1\leq n\leq p^sT\\ \gcd(p,n)=1}}S(mn,a;q).
$$
Now we apply Lemma~\ref{Expression for Kloosterman sums with prime power moduli}. Since there are two solutions for the quadratic congruence of $l$,
it's necessary to note that the expression for Kloosterman sums doesn't depend on which solution we choose. Hence we may write
$$
S(p^sT)=q^{1/2}\sum\limits_{\substack{n\leq p^sT\\\left(\frac{nma}{p}\right)=1}}\sum\limits_{l^2\equiv nma \bmod q}\left(\frac{l}{p}\right)^k\Re\, \vartheta_q e\left(\frac{2l}{q}\right),
$$
where $\sum\limits_{l^2\equiv nma \bmod q}$ means summing over the two solutions of the congruence $l^2\equiv nma \bmod q$.
Classify $n$ by the remainder of $mna\bmod p^s$,
\begin{equation}\label{Expression 1 for S(p^sT)}
S(p^sT)=q^{1/2}\sum\limits_{\substack{1\leq\alpha<p^s\\\left(\frac{\alpha}{p}\right)=1}}
\sum\limits_{\substack{n\leq p^sT\\nma\equiv\alpha \bmod p^s}}
\sum\limits_{l^2\equiv nma \bmod q}\left(\frac{l}{p}\right)^k\Re\, \vartheta_q e\left(\frac{2l}{q}\right).
\end{equation}

To solve the quadratic congruence in the inner sum, we use the following argument, which is similar to that in~\cite{Khan}. Since $(ma,q)=1$, suppose $ma\xi\equiv1 \bmod q$ and $\vartheta\equiv\xi \bmod p^s$ with $1\leq\vartheta<p^s,s\geq 1$.
From $nma\equiv\alpha\bmod p^s$, we have $n\equiv\vartheta\alpha\bmod p^s$, which implies
that there exists $t\in \Z$, such that $n=\vartheta\alpha+p^st$. Now we have
$$
l^2\equiv ma(\vartheta\alpha+p^st)\equiv ma\vartheta\alpha(1+\kappa p^st)\ \bmod q,
$$
with $\vartheta\alpha\kappa\equiv1\bmod q$. Note that $ma\vartheta\equiv1\bmod p$, then $\left(\frac{ma\vartheta\alpha}{p}\right)=1$. By Hensel's lemma, there exists $\omega\in\Z$, such that
$\omega^2\equiv ma\vartheta\alpha \bmod q.$ Thus
$$
l^2\equiv nma\equiv \omega^2(1+\kappa p^st)\ \bmod q.
$$
We remark that $\omega$ is determined by $m,a,\alpha,p^s$ and does not depend on $n$.
Consider $1+\kappa p^st$ in the $p$-adic field $\Q_p$. By Taylor's expansion
(see~\cite[Chapter~IV.1]{Koblitz}), we have
$$
(1+\kappa p^st)^{1/2}=1+\sum\limits_{i=1}^{\infty}\binom{1/2}{i}\kappa^ip^{is}t^i,
$$
for $s\geq 1$. Here the coefficients $\binom{1/2}{i}=\frac{1/2(1/2-1)\cdots(1/2-i+1)}{i!}$ with $i\geq 1$ happen to be $p$-adic integers, since $p$ is an odd prime.
Then we have
$$
(1+\kappa p^st)^{1/2}\equiv\sum\limits_{i=0}^{\fl{k/s}}g(i)\kappa^ip^{is}t^i\ \bmod p^k,
$$
where $g(0)=1$ and $g(i)$ with $1\leq i\leq \fl{k/s}$ are integers given by
\begin{equation}\label{Definition of g(i)}
g(i)\equiv\binom{1/2}{i}\bmod p^k,\qquad  0\leq g(i)<p^k.
\end{equation}
Thus we get two solutions for the quadratic congruence of $l$ in the inner sum
of~\eqref{Expression 1 for S(p^sT)}.
$$
l\equiv\pm\omega f(t)\bmod q,
$$
where
\begin{equation}\label{Definition of f(t)}
f(t):=\sum\limits_{i=0}^{ \fl{k/s}}g(i)\lambda^ip^{is}t^i.
\end{equation}
Choosing the solution $l\equiv\omega f(t)\bmod q$ and noting that $f(t)\equiv g(0)\equiv 1\bmod p$, we have
$$
S(p^sT)=2q^{1/2}\sum\limits_{\substack{1\leq\alpha<p^s\\\left(\frac{\alpha}{p}\right)=1}}\left(\frac{\omega}{p}\right)^k\sum\limits_{\substack{t\leq \frac{p^sT-\vartheta\alpha}{p^s}}} \Re\,\vartheta_q e\left(\frac{2\omega f(t)}{q}\right),
$$
which gives
$$
S(p^sT)\leq 2q^{1/2}\sum\limits_{\substack{1\leq\alpha<p^s\\\left(\frac{\alpha}{p}\right)=1}}
\left|\sum\limits_{\substack{t\leq \frac{p^sT-\vartheta\alpha}{p^s}}}e\left(\frac{2\omega f(t)}{q}\right)\right|.
$$
Recalling $1\leq\vartheta<p^s$, we have
\begin{equation}\label{Pre-bound of S(p^sT)}
S(p^sT)\leq 2q^{1/2}\sum\limits_{\substack{1\leq\alpha<p^s\\\left(\frac{\alpha}{p}\right)=1}}\left|\sum\limits_{\substack{t\leq T}}e\left(\frac{2\omega f(t)}{q}\right)\right|+O\left(q^{1/2}p^{2s}\right).
\end{equation}
Since $B>0$ in~\eqref{Def of s} is fixed and sufficiently large and $q^{\lambda}\leq N\leq q$, the contribution of the above $O$-term is
$$
q^{1/2}p^{2s} \ll q^{1/2}N^{2/B} \ll  Nq^{1/2-(1-2/B)\lambda},
$$
which is small enough. Hence we only need to deal with the first term in~\eqref{Pre-bound of S(p^sT)}. Denote the inner sum over $t$ as
$$
M:=\sum\limits_{\substack{t\leq T}}e\left(\frac{2\omega f(t)}{q}\right).
$$
Applying Lemma~\ref{Main lemma} to $M$, we obtain
\begin{equation}
\label{Bound for M}
|M|\leq 3T^{1-c/r^2}+dR.
\end{equation}
Here $c>10^{-13}$ is an absolute constant, $r$ is given by $T^r=p^k$, $d:= \fl{k/s}$ is the degree of $f(t)$ and $R$ is the maximal number of solutions of the congruences
\begin{equation}\label{Congruence equation}
f^{(u)}(x)\equiv 0\bmod p^\beta,\qquad  1\leq x\leq T,
\end{equation}
for $25r\leq u\leq 27r$, where $\beta:=\fl{k/10}+1$. Note that
\begin{equation}\label{Lower bound for T}
T=\fl{N/p^s}\geq \fl{N^{1-1/B}} \geq \fl{p^{(1-1/B)\lambda k}} \geq p^{(1-1/B)\lambda k/2}.
\end{equation}
Recall $T^r=p^k$, then
\begin{equation}\label{Upper bound for r}
r=\frac{k\log p}{\log T}\leq \frac{2}{(1-1/B)\lambda}.
\end{equation}
Let $\mathfrak{F}_1$ denote the contribution of the term $3T^{\left(1-\frac{c}{r^2}\right)}$ in~\eqref{Bound for M} to $S(p^sT)$, then
$$
\mathfrak{F}_1 \ll q^{1/2}p^sT^{1-c/r^2}\ll Nq^{1/2}T^{-c/r^2}\ll Nq^{1/2-\delta_1(\lambda)},
$$
where
$$
\delta_1(\lambda):=\frac{c\lambda^3(1-1/B)^3}{8}.
$$

Now we estimate the contribution of $dR$ in~\eqref{Bound for M} to $S(p^sT)$.
To this aim, we give the upper bound for $d=k/s$ first, which is
\begin{equation}\label{Upper bound for d}
d\leq k/s=\frac{k}{\fl{ \frac{\log N}{B\log p}}}\leq \frac{k}{\left[\lambda k/B\right]}\leq \frac{2B}{\lambda},
\end{equation}
provided
\begin{equation}\label{Condition 1 of k}
k\geq \frac{10B}{\lambda}.
\end{equation}
Let $R_u$ denote the number of solutions of the equation~\eqref{Congruence equation}, then
$$
R=\max\limits_{25r\leq u\leq 27r}R_u.
$$
From Lemma~\ref{Stepannov and Shparlinski's lemma}, we have
$$
dR\ll d^2\max\limits_{25r\leq u\leq 27r}(pT)^{1-1/(d-u)}+d^2T\max\limits_{25r\leq u\leq 27r}p^{-(\beta-\ord_pf^{(u)})/(d-u)},
$$
which yields
\begin{equation}\label{Upper bound of dR}
dR\ll d^2p\max\limits_{25r\leq u\leq 27r}T^{1-1/d}+d^2T\max\limits_{25r\leq u\leq 27r}p^{-(\beta-\ord_pf^{(u)})/d}.
\end{equation}
Let $\mathfrak{F}_2$ denote the contribution of the first term on the right hand side to $S(p^sT)$. Then
$$
\mathfrak{F}_2\ll q^{1/2}d^2p^{s+1}T^{1-1/d}\ll Nq^{1/2}d^2pT^{-1/d}.
$$
Further, using the lower bound~\eqref{Lower bound for T} of $T$ and the upper
bound~\eqref{Upper bound for d} of $d$,
$$
\mathfrak{F}_2\ll\frac{4B^2}{\lambda^2}q^{-(\lambda^2(B-1)/4+1/k}\leq\frac{4B^2}{\lambda^2}q^{-\lambda^2(B-1)/8},
$$
provided
\begin{equation}\label{Condition 2 of k}
k\geq \frac{8}{\lambda^2(B-1)}.
\end{equation}

Now only the contribution of the second term in~\eqref{Upper bound of dR} to $S(p^sT)$ is left. Let's estimate the upper bound of $\ord_pf^{(u)}$ for $25r\leq u\leq 27r$.
Noting that $(\lambda,p)=1$ in the definition~\eqref{Definition of f(t)} of $f(t)$, we have
$$
\ord_p(f^{(u)})=\min\limits_{\substack{u\leq i\leq d\\g(i)\neq 0}}\left(\nu_p\left(g(i)\frac{i!}{(i-u)!}\right)+is\right).
$$
We claim that if $k$ is sufficiently large, then
$$
g(i)\neq 0 \mand  \nu_p(g(i))=\nu_p\left(\binom{1/2}{i}\right),
$$
for all $u\leq i\leq d$. To see this, recall
$$
\binom{1/2}{i}=\frac{(-1)^{i-1}(2i-3)!}{2^{2i-2}i!(i-2)!} \qquad  \text{for}\  i\geq 3
$$
which is an $p$-adic integer. Then an argument similar to that in the proof of Lemma~\ref{Upper bound for p-adic order} gives
$$
\nu_p\left(\binom{1/2}{i}\right)=-\sum\limits_{j=1}^{\frac{\log(2i-3)}{\log p}}\frac{1}{p^j}+E^{\prime}(i,p),
$$
with $|E^{\prime}(i,p)|\leq \frac{3\log(2i)}{\log p}$. Therefore, for $u\leq i\leq d$, we have
$$
\nu_p\left(\binom{1/2}{i}\right)\leq \frac{3\log(2d)}{\log p}\leq \frac{3\log(4B/\lambda)}{\log 3},
$$
which implies
$$
\nu_p\left(\binom{1/2}{i}\right)\leq k-1
$$
provided
\begin{equation}\label{Condition 3 of k}
k\geq \frac{3\log(4B/\lambda)}{\log 3}+1.
\end{equation}
Now our claim follows from the definition~\eqref{Definition of g(i)} of $g(i)$. Thus we can remove the condition $g(i)\neq 0$ for $k$ satisfying the above condition and get
$$
\ord_p(f^{(u)})=\min\limits_{u\leq i\leq d}\left(\nu_p\left(\binom{1/2}{i}\frac{i!}{(i-u)!}\right)+is\right).
$$
By Lemma~\ref{Upper bound for p-adic order}, we have
$$
\ord_p(f^{(u)})\leq \min\limits_{u\leq i\leq d}\left(u\sum\limits_{j=1}^{\infty}\frac{1}{p^j}+\frac{3\log(2i)}{\log p}+is\right),
$$
which yields
$$
\ord_p\left(f^{(u)}\right)\leq u\sum\limits_{j=1}^{\infty}\frac{1}{p^j}+\frac{3\log(2u)}{\log p}+us.
$$
Hence, for every $25r\leq u\leq 27r$, we have an uniform bound
$$
\ord_p\left(f^{(u)}\right)\leq \frac{27r}{p-1}+3\log(54r)+27rs.
$$

Let $\mathfrak{F}_3$ denote the contribution of the second term in~\eqref{Upper bound of dR} to $S(p^sT)$, then
$$
\mathfrak{F}_3\ll \left(4B^2/\lambda^2\right)Nq^{1/2}\max\limits_{25r\leq u\leq 27r}p^{-(\beta-\ord_pf^{(u)}/{d}}
$$
by~\eqref{Pre-bound of S(p^sT)}, \eqref{Bound for M} and~\eqref{Upper bound for d}.
Recall $\beta=[k/10]+1$ and by~\eqref{Upper bound for d} again,
$$
\frac{\beta-\ord_p\left(f^{(u)}\right)}{d}\geq \frac{\lambda}{2B}\left(k/10-\ord_p\left(f^{(u)}\right)\right),
$$
which gives
$$
\frac{\beta-\ord_p\left(f^{(u)}\right)}{d}\geq \frac{\lambda}{2B}\left(k/10-3\log(54r)-54rs\right),
$$
Note that $N\leq p^k$, then, recalling~\eqref{Def of s}, we obtain
$$
s=\left[\frac{\log N}{B\log p}\right]\leq \frac{\log p^k}{B\log p}\leq \frac{k}{B}.
$$
From this and the upper bound~\eqref{Upper bound for r} of $r$, we get
$$
\frac{\beta-\ord_pf^{(u)}}{d}\geq \frac{\lambda}{2B}\left(k/10-\frac{108k}{\lambda(B-1)}-3\log\left(\frac{108}{\lambda(1-1/B)}\right)\right).
$$
Taking $B=B(\lambda)>0$ sufficiently large, such that
$$
\frac{108}{\lambda(B-1)}\leq 1/20.
$$
Then
$$
\frac{\beta-\ord_pf^{(u)}}{d}\geq \frac{\lambda}{2B}\left(k/20-3\log\left(\frac{108}{\lambda(1-1/B)}\right)\right).
$$
It follows that
$$
\frac{\beta-\ord_pf^{(u)}}{d}\geq \frac{\lambda k}{80B}
$$
provided
\begin{equation}\label{Condition 4 of k}
k\geq 120\log\left(\frac{108}{\lambda(1-1/B)}\right),
\end{equation}
which yields
$$
\mathfrak{F}_3\ll\left(4B^2/\lambda^2\right)Nq^{1/2-\lambda k/(80B)}.
$$
We now choose $k_0$ in such a way that for $k \ge k_0$ the conditions~\eqref{Condition 1 of k},
 \eqref{Condition 2 of k}, \eqref{Condition 3 of k} and~\eqref{Condition 4 of k} are satisfied,
this completes the proof of Theorem~\ref{Theorem on Kloosterman sums}.

\section*{Acknowledgement}

The first two authors gratefully acknowledge the support,   hospitality
and   excellent conditions of the School of Mathematics and Statistics of UNSW during their visit.

This work was supported by NSFC Grant 11401329 (for K. Liu), by ARC Grant DP140100118 (for I. E. Shparlinski), by NSFC Grant 11201275
and the Fundamental Research Funds for the Central Universities
Grant~GK201503014 (for T. P. Zhang).

\end{document}